%%%%%%%%%%%%%%%%%%%%%%% file typeinst.tex %%%%%%%%%%%%%%%%%%%%%%%%%
%
% This is the LaTeX source for the instructions to authors using
% the LaTeX document class 'llncs.cls' for contributions to
% the Lecture Notes in Computer Sciences series.
% http://www.springer.com/lncs       Springer Heidelberg 2006/05/04
%
% It may be used as a template for your own input - copy it
% to a new file with a new name and use it as the basis
% for your article.
%
% NB: the document class 'llncs' has its own and detailed documentation, see
% ftp://ftp.springer.de/data/pubftp/pub/tex/latex/llncs/latex2e/llncsdoc.pdf
%
%%%%%%%%%%%%%%%%%%%%%%%%%%%%%%%%%%%%%%%%%%%%%%%%%%%%%%%%%%%%%%%%%%%

\documentclass[runningheads,a4paper]{llncs}

\usepackage{amssymb}
\usepackage{amsmath}
\setcounter{tocdepth}{3}
\usepackage{graphicx}

\usepackage{url}
%\urldef{\mailsa}\path|{alfred.hofmann, ursula.barth, ingrid.haas, frank.holzwarth,|
%\urldef{\mailsb}\path|anna.kramer, leonie.kunz, christine.reiss, nicole.sator,|
\urldef{\mailsc}\path|{ep374, cbs31}@cam.ac.uk|    
\newcommand{\keywords}[1]{\par\addvspace\baselineskip
\noindent\keywordname\enspace\ignorespaces#1}
\newcommand{\om}{\mathrm{\Omega}}

\usepackage{enumerate}
\usepackage{subcaption}

\usepackage{color}
\usepackage[usenames,dvipsnames,svgnames,table]{xcolor}
\usepackage{hyperref}
\hypersetup{
  colorlinks,
  citecolor= Brown,
  linkcolor= link,
  urlcolor= link
}
\definecolor{link}{rgb}{0.18,0.25,0.78}

 \usepackage{lineno}

  \numberwithin{equation}{section}

\begin{document}

\setlength{\belowdisplayskip}{2.8pt} \setlength{\belowdisplayshortskip}{2.8pt}
\setlength{\abovedisplayskip}{2.8pt} \setlength{\abovedisplayshortskip}{2.8pt}
\setlength{\belowcaptionskip}{-22pt}

\mainmatter  

% first the title is needed
\title{Infimal Convolution Regularisation Functionals of $\mathrm{BV}$ and $\mathrm{L}^{p}$ Spaces. The Case $p=\infty$
}

% a short form should be given in case it is too long for the running head
\titlerunning{Infimal convolution regularisation of $\mathrm{BV}$ and $\mathrm{L}^{\infty}$}

\author{Martin Burger \inst{1}
\and Konstantinos Papafitsoros \inst{2}\and Evangelos Papoutsellis \inst{3}\and Carola-Bibiane Sch\"onlieb \inst{3}}
\authorrunning{Burger, Papafitsoros, Papoutsellis and Sch\"onlieb }
% (feature abused for this document to repeat the title also on left hand pages)

% the affiliations are given next; don't give your e-mail address
% unless you accept that it will be published
\institute{
Institute for Computational and Applied Mathematics, University of M\"unster, Germany
\and Institute for Mathematics, Humboldt University of Berlin, Germany
\and Department of Applied Mathematics and Theoretical Physics, University of Cambridge, United Kingdom\\
\email{martin.burger@wwu.de}, 
\email{papafitsoros@hu-berlin.de},
\mailsc
}
%\thanks{The authors acknowledge...}
%\institute{Springer-Verlag, Computer Science Editorial,\\
%Tiergartenstr. 17, 69121 Heidelberg, Germany\\
%\mailsa\\
%\mailsb\\
%\mailsc\\
%\url{http://www.springer.com/lncs}}

%
% NB: a more complex sample for affiliations and the mapping to the
% corresponding authors can be found in the file "llncs.dem"
% (search for the string "\mainmatter" where a contribution starts).
% "llncs.dem" accompanies the document class "llncs.cls".
%

%\toctitle{Lecture Notes in Computer Science}
%\tocauthor{Authors' Instructions}
\maketitle

\begin{abstract}
In this paper we analyse an infimal convolution type regularisation functional called $\mathrm{TVL}^{\infty}$, based on the total variation ($\mathrm{TV}$) and the $\mathrm{L}^{\infty}$ norm of the gradient. The  functional belongs to a more general family of $\mathrm{TVL}^{p}$ functionals ($1<p\le \infty$) introduced in \cite{tvlp}. There, the case $1<p<\infty$ is examined while here we focus on the $p=\infty$ case. We show via analytical and numerical results that the minimisation of the  $\mathrm{TVL}^{\infty}$ functional promotes piecewise affine structures in the reconstructed images similar to the state of the art total generalised variation ($\mathrm{TGV}$) but improving upon preservation of hat--like structures. We also propose a spatially adapted version of our model that produces results comparable to $\mathrm{TGV}$ and allows space for further improvement.

\keywords{Total Variation, Infimal Convolution, $\mathrm{L}^{\infty}$ norm, Denoising, Staircasing}
\end{abstract}

\section{Introduction}
In the variational setting for imaging, given image data $f\in\mathrm{L}^{s}(\om)$, $\om\subseteq \mathbb{R}^{2}$, ones aims to reconstruct an image $u$ by minimising a functional of the type
\begin{equation}\label{min:general_T}
\min_{u\in X} \frac{1}{s}\|f-Tu\|_{\mathrm{L}^{s}(\om)}^{s}+\mathrm{\Psi}(u),
\end{equation}
over a suitable function space $X$. Here $T$  denotes a linear and bounded operator that encodes the transformation or degradation that the original image has gone through. Random noise is also usually present in the degraded image, the statistics of which determine the  norm  in the first term of \eqref{min:general_T}, the \emph{fidelity term}.  The presence of $\mathrm{\Psi}$, the \emph{regulariser},  makes the minimisation \eqref{min:general_T} a well--posed problem and its choice is crucial for the overall quality of the reconstruction. A classical regulariser in imaging is the total variation functional weighted with a parameter $\alpha>0$, $\alpha\mathrm{TV}$ \cite{rudin1992nonlinear}, where
\begin{equation}\label{def:tv}
\mathrm{TV}(u):=\sup\left \{\int_{\om}u\,\mathrm{div}\phi\,dx:\; \phi\in C_{c}^{1}(\om,\mathbb{R}^{2}), \; \|\phi\|_{\infty}\le 1 \right \}.
\end{equation}
While $\mathrm{TV}$ is able to preserve edges in the reconstructed image, it also promotes piecewise constant structures leading to undesirable staircasing artefacts. Several regularisers that incorporate higher order derivatives have been introduced in order to resolve this issue. The most prominent one has been the second order total generalised variation ($\mathrm{TGV}$) \cite{TGV} which can be interpreted  as a special type of infimal convolution of first and second order derivatives. Its definition reads
\begin{equation}\label{def:tgv}
\mathrm{TGV}_{\alpha,\beta}^{2}(u):=\min_{w\in\mathrm{BD}(\om)} \alpha \|Du-w\|_{\mathcal{M}}+\beta \|\mathcal{E}w\|_{\mathcal{M}}.
\end{equation}
Here $\alpha,\beta$ are positive parameters, $\mathrm{BD}(\om)$ is the space of functions of bounded deformation, $\mathcal{E}w$ is the distributional symmetrised gradient and $\|\cdot\|_{\mathcal{M}}$ denotes the Radon norm of a finite Radon measure, i.e.,
\[\|\mu\|_{\mathcal{M}}=\sup\left \{\int_{\om}\phi\,d\mu:\; \phi\in C_{c}^{\infty}(\om,\mathbb{R}^{2}), \; \|\phi\|_{\infty}\le 1 \right \}.\]
 $\mathrm{TGV}$ regularisation typically produces piecewise smooth reconstructions eliminating the staircasing effect. A plausible question  is whether results of similar quality can be achieved using simpler, first order regularisers. For instance, it is known that Huber $\mathrm{TV}$ can reduce the staircasing up to an extent \cite{Huber}.

In \cite{tvlp}, a family of first order infimal convolution type regularisation functionals is introduced, that reads
\begin{equation}\label{def:tvlp}
\mathrm{TVL}_{\alpha,\beta}^{p}(u):=\min_{w\in \mathrm{L}^{p}(\om)} \alpha \|Du-w\|_{\mathcal{M}}+\beta \|w\|_{\mathrm{L}^{p}(\om)},
\end{equation}
where $1<p\le \infty$. While in \cite{tvlp}, basic properties of \eqref{def:tvlp} are shown for the general case $1<p\le \infty$, see Proposition \ref{lbl:tvlinf_properties}, the main focus remains on the finite $p$ case. There, the $\mathrm{TVL}^{p}$ regulariser is successfully applied to image denoising and decomposition, reducing significantly the staircasing effect and producing piecewise smooth results that are very similar to the solutions obtained by $\mathrm{TGV}$. Exact solutions of the $\mathrm{L}^{2}$ fidelity denoising problem are also computed there for simple one dimensional data.

\subsection*{Contribution of the Present Work}
The purpose of the present paper is to examine more thoroughly the case $p=\infty$, i.e.,
\begin{equation}\label{tvlinf}
\mathrm{TVL}_{\alpha,\beta}^{\infty}(u):=\min_{w\in \mathrm{L}^{\infty}(\om)} \alpha \|Du-w\|_{\mathcal{M}}+\beta \|w\|_{\mathrm{L}^{\infty}(\om)},
\end{equation}
and the use of the $\mathrm{TVL}^{\infty}$ functional in $\mathrm{L}^{2}$ fidelity denoising 
\begin{equation}\label{min:general}
\min_{u} \frac{1}{2}\|f-u\|_{\mathrm{L}^{2}(\om)}^{2}+\mathrm{TVL}_{\alpha,\beta}^{\infty}(u).
\end{equation}
%We mainly study the problem \eqref{min:general} when $s=2$ and $T=Id$, i.e., denoising images corrupted by Gaussian noise. 
We  study thoroughly the one dimensional  version of  \eqref{min:general}, by computing exact solutions for data $f$ a piecewise constant and a piecewise affine step function. We show that the solutions are piecewise affine and we depict some similarities and  differences to TGV solutions. The functional $\mathrm{TVL}_{\alpha,\beta}^{\infty}$ is further tested for Gaussian denoising.  We show that  $\mathrm{TVL}_{\alpha,\beta}^{\infty}$, unlike $\mathrm{TGV}$, is able to recover hat--like structures, a property that is already present in the $\mathrm{TVL}_{\alpha,\beta}^{p}$ regulariser for large values of $p$, see \cite{tvlp}, and it is enhanced here.
 After explaining some limitations of our model, we propose an extension where the parameter $\beta$ is spatially varying, i.e., $\beta=\beta(x)$, and discuss a rule for selecting its values. The resulting denoised images are comparable to the TGV reconstructions and indeed the model has the potential to produce much better results.

\section{Properties of the $\mathrm{TVL}_{\alpha,\beta}^{\infty}$ Functional}
The following properties of the $\mathrm{TVL}_{\alpha,\beta}^{\infty}$ functional are shown in \cite{tvlp}. We refer the reader to  \cite{tvlp,papoutsellisphd}  for the corresponding proofs and to \cite{AmbrosioBV} for an introduction to the space of functions of bounded variation $\mathrm{BV}(\om)$.

\newtheorem{tvlinf_properties}{Proposition}
\begin{tvlinf_properties}[\cite{tvlp}]\label{lbl:tvlinf_properties}
Let $\alpha,\beta>0$, $d\ge 1$, let $\om\subseteq \mathbb{R}^{d}$ be an open, connected domain with Lipschitz boundary and define for $u\in \mathrm{L}^{1}(\om)$
\begin{equation}\label{tvinf_prop}
\mathrm{TVL}_{\alpha,\beta}^{\infty}(u):=\min_{w\in \mathrm{L}^{\infty}(\om)} \alpha \|Du-w\|_{\mathcal{M}}+\beta \|w\|_{\mathrm{L}^{\infty}(\om)}. 
\end{equation}
Then we have the following:
\begin{enumerate}[(i)]
\item $\mathrm{TVL}_{\alpha,\beta}^{\infty}(u)<\infty$ if and only if $u\in \mathrm{BV}(\om)$.
\item If $u\in \mathrm{BV}(\om)$ then the minimum in \eqref{tvinf_prop} is attained. 
\item $\mathrm{TVL}_{\alpha,\beta}^{\infty}(u)$ can equivalently be defined as
\[\mathrm{TVL}_{\alpha,\beta}^{\infty}(u)=\sup\left \{\int_{\om}u\,\mathrm{div}\phi\,dx:\; \phi\in C_{c}^{1}(\om,\mathbb{R}^{2}), \; \|\phi\|_{\infty}\le \alpha,\; \|\phi\|_{\mathrm{L}^{1}(\om)}\le \beta \right \},\]
and $\mathrm{TVL}_{\alpha,\beta}^{\infty}$ is lower semicontinuous w.r.t. the  strong $\mathrm{L}^{1}$ convergence.
\item There exist constants $0<C_{1}<C_{2}<\infty$ such that 
\[C_{2}\mathrm{TV}(u)\le \mathrm{TVL}_{\alpha,\beta}^{\infty}(u)\le C_{1}\mathrm{TV}(u),\quad \text{for all }u\in\mathrm{BV}(\om).\]
\item If $f\in \mathrm{L}^{2}(\om)$, then the minimisation problem
\[\min_{u\in\mathrm{BV}(\om)} \frac{1}{s}\|f-u\|_{\mathrm{L}^{2}(\om)}^{2}+\mathrm{TVL}_{\alpha,\beta}^{\infty}(u),\]
has a unique solution.
\end{enumerate}
\end{tvlinf_properties}

\section{The One Dimensional $\mathrm{L^{2}}$--$\mathrm{TVL}_{\alpha,\beta}^{\infty}$ Denoising Problem}

In order to get an intuition about the underlying regularising mechanism of the $\mathrm{TVL}_{\alpha,\beta}^{\infty}$ regulariser, we study here the one dimensional $\mathrm{L}^{2}$ denoising problem
\begin{equation}\label{min_1d}
\min_{\substack{u\in\mathrm{BV}(\om)  \\ w\in \mathrm{L}^{\infty}(\om) }} \frac{1}{2}\|f-u\|_{\mathrm{L}^{2}(\om)}^{2}+\alpha \|Du-w\|_{\mathcal{M}}+\beta \|w\|_{\mathrm{L}^{\infty}(\om)}.
\end{equation}
In particular, we  present  exact solutions for simple data functions. In order to do so, we use the following theorem:

\newtheorem{optimality}[tvlinf_properties]{Theorem}
\begin{optimality}[Optimality conditions]\label{lbl:optimality}
Let $f\in \mathrm{L}^{2}(\om)$. A pair $(u,w)\in \mathrm{BV}(\om)\times \mathrm{L}^{\infty}(\om)$ is a solution of \eqref{min_1d} if and only if there exists a unique $\phi\in \mathrm{H}_{0}^{1}(\om)$ such that 
\begin{align}
\phi'&= u-f,\label{opt1}\\
\phi&\in \alpha \mathrm{Sgn}(Du-w),\label{opt2}\\
\phi&\in \begin{cases}
\left \{\psi\in \mathrm{L}^{1}(\om):\; \|\psi\|_{\mathrm{L}^{1}(\om)}\le \beta \right \}, & \text{ if } w=0,\\
\left \{\psi\in \mathrm{L}^{1}(\om):\; \langle \psi, w \rangle=\beta\|w\|_{\mathrm{L}^{\infty}(\om)},\; \|\psi\|_{\mathrm{L}^{1}(\om)}\le \beta \right \}, & \text{ if }w\ne 0.
\end{cases}\label{opt3}
\end{align}
\end{optimality}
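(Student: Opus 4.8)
The plan is to derive the optimality conditions via convex duality / subdifferential calculus, treating \eqref{min_1d} as the minimisation of a sum of convex functionals over $(u,w)$. Write the objective as $F(u,w) = \frac12\|f-u\|_{\mathrm{L}^2}^2 + \alpha\|Du-w\|_{\mathcal M} + \beta\|w\|_{\mathrm{L}^\infty}$. Since this is a sum of a smooth convex term and two proper, convex, lower semicontinuous terms, and (by Proposition \ref{lbl:tvlinf_properties}(v)) a minimiser exists and is unique in $u$, the pair $(u,w)$ is optimal if and only if $0 \in \partial F(u,w)$. The first step is to compute this subdifferential componentwise, using a sum rule; the standard device is to introduce the linear operator $u\mapsto Du$ and a dual variable living in the predual of $\mathcal M(\om)$, i.e.\ a function $\phi$, and to observe that the term $\alpha\|Du-w\|_{\mathcal M}$ has the variational characterisation already recorded in Proposition \ref{lbl:tvlinf_properties}(iii): its convex conjugate is the indicator of $\{\phi \in C_c^1 : \|\phi\|_\infty \le \alpha\}$ (closed suitably), paired against $Du-w$.

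Concretely, I would proceed as follows. First, stationarity in $u$: the Euler--Lagrange condition reads $(u-f) + D^*\eta = 0$ in the distributional sense, where $\eta \in \alpha\,\mathrm{Sgn}(Du-w)$ is an element of the subdifferential of the measure-norm term evaluated at $Du-w$. Identifying $D^* = -\mathrm{div}$ on the relevant space and integrating, one gets that $u-f$ is the derivative of a function $\phi$ with $\phi \in \mathrm{H}_0^1(\om)$ — the boundary condition $\phi\in\mathrm{H}_0^1$ comes from the fact that test fields $\varphi$ in the dual characterisation have compact support, so no boundary term survives, and the $\mathrm{H}^1$ regularity comes from $\phi' = u-f \in \mathrm{L}^2$. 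This yields \eqref{opt1}, and simultaneously the identification $\eta = \phi$, giving \eqref{opt2}, i.e.\ $\phi \in \alpha\,\mathrm{Sgn}(Du-w)$, which is just the statement that $\langle \phi, Du-w\rangle = \alpha\|Du-w\|_{\mathcal M}$ together with $\|\phi\|_\infty \le \alpha$. Second, stationarity in $w$: the terms involving $w$ are $\alpha\|Du-w\|_{\mathcal M} + \beta\|w\|_{\mathrm{L}^\infty}$, and setting the subdifferential in $w$ to zero gives $0 \in -\partial(\alpha\|\cdot\|_{\mathcal M})(Du-w) + \beta\,\partial\|\cdot\|_{\mathrm{L}^\infty}(w)$, i.e.\ $\phi \in \beta\,\partial\|\cdot\|_{\mathrm{L}^\infty}(w)$. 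Now I invoke the standard description of the subdifferential of the $\mathrm{L}^\infty$-norm: at $w=0$ it is the ball $\{\psi \in \mathrm{L}^1 : \|\psi\|_{\mathrm{L}^1}\le 1\}$, and at $w\ne 0$ it is the set of $\psi\in\mathrm{L}^1$ with $\|\psi\|_{\mathrm{L}^1}\le 1$ and $\langle\psi,w\rangle = \|w\|_{\mathrm{L}^\infty}$; scaling by $\beta$ produces exactly \eqref{opt3}.

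The converse direction (sufficiency) is then routine: given $\phi\in\mathrm{H}_0^1(\om)$ satisfying \eqref{opt1}--\eqref{opt3}, one reverses the steps, using that $\phi$ is an admissible dual field (it satisfies $\|\phi\|_\infty\le\alpha$ from \eqref{opt2} and $\|\phi\|_{\mathrm{L}^1}\le\beta$ from \eqref{opt3}) to produce, for any competitor $(\tilde u,\tilde w)$, the chain of inequalities showing $F(u,w)\le F(\tilde u,\tilde w)$; concretely one writes $\frac12\|f-\tilde u\|^2 \ge \frac12\|f-u\|^2 + \langle u-f, \tilde u - u\rangle = \frac12\|f-u\|^2 + \langle \phi', \tilde u - u\rangle$, integrates by parts, and combines with $\langle\phi, D\tilde u - \tilde w\rangle \le \alpha\|D\tilde u - \tilde w\|_{\mathcal M}$ and $\langle\phi, \tilde w\rangle \le \beta\|\tilde w\|_{\mathrm{L}^\infty}$, while the equalities in \eqref{opt2}--\eqref{opt3} make the corresponding bounds tight at $(u,w)$. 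The uniqueness of $\phi$ follows from \eqref{opt1}: $\phi$ is determined as the unique $\mathrm{H}_0^1$ antiderivative of $u-f$ (which has zero mean — a necessary compatibility condition that itself follows from the existence of such a $\phi$, or equivalently from testing the optimality with constants).

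**The main obstacle** I anticipate is the rigorous justification of the sum rule and the duality pairing in the infinite-dimensional, non-reflexive setting: $\mathcal M(\om)$ is the dual of $C_0(\om)$ and $\mathrm{L}^\infty(\om)$ is the dual of $\mathrm{L}^1(\om)$, so one must be careful about which topology the subdifferentials are taken in and about the density/closure issues when passing between $C_c^1$ test fields and the actual dual variable $\phi\in\mathrm{H}_0^1$. In one dimension this is considerably more tractable — $\mathrm{BV}(\om)$ functions are genuine functions with left/right limits, $Du$ is a scalar measure, and $\phi\in\mathrm{H}_0^1(\om)\hookrightarrow C_0(\overline\om)$ is continuous and vanishes at the endpoints, so the pairing $\langle\phi, Du-w\rangle$ is unambiguous and integration by parts $\int_\om \phi'(\tilde u - u)\,dx = -\int_\om\phi\,d(D\tilde u - Du)$ holds cleanly. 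I would therefore carry out the argument directly in 1D using these concrete representations, rather than appealing to an abstract Fenchel--Rockafellar theorem, and verify the qualification condition (e.g.\ that the smooth fidelity term has full domain) explicitly to license the sum rule.
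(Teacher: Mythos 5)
Your proposal is correct and follows essentially the same route as the paper, which itself omits the argument and simply points to Fenchel--Rockafellar duality as in the finite-$p$ case: your subdifferential/sum-rule computation with the dual variable $\phi\in \mathrm{H}_0^1(\om)$ identified through $\phi'=u-f$, the identification of $\partial\|\cdot\|_{\mathcal{M}}$ with $\mathrm{Sgn}$ on $C_0(\om)$, and the standard description of $\partial\|\cdot\|_{\mathrm{L}^{\infty}}$ in the $(\mathrm{L}^1,\mathrm{L}^\infty)$ pairing is precisely the concrete one-dimensional instantiation of that duality argument. The points you flag (qualification condition for the sum rule, non-reflexivity, and the single dual element $\phi$ serving both the $u$- and $w$-stationarity relations) are exactly the right ones to check, and your sufficiency and uniqueness arguments are sound.
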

Recall that for a finite Radon measure $\mu$,  $\mathrm{Sgn}(\mu)$ is defined as 
\[\mathrm{Sgn}(\mu)=\left \{\phi\in \mathrm{L}^{\infty}(\om)\cap \mathrm{L}^{\infty}(\om,\mu):\; \|\phi\|_{\mathrm{L}^{\infty}(\om)}\le 1,\; \phi=\frac{d\mu}{d|\mu|}, \; |\mu|-\text{a.e.} \right \}.\]
As it is shown in \cite{BrediesL1}, $\mathrm{Sgn}(\mu)\cap C_{0}(\om)=\partial \|\cdot\|_{\mathcal{M}}(\mu)\cap C_{0}(\om)$.
\begin{proof}
 The proof of Theorem \ref{lbl:optimality} is based on Fenchel--Rockafellar duality theory and follows closely the corresponding proof of the finite $p$ case. We thus omit it and we refer the reader  to \cite{tvlp,papoutsellisphd} for further details, see also
\cite{BrediesL1,Papafitsoros_Bredies} for the analogue optimality conditions for the one dimensional $\mathrm{L}^{1}$--$\mathrm{TGV}$ and $\mathrm{L}^{2}$--$\mathrm{TGV}$ problems. 
\qed
\end{proof}

The following proposition states that the solution $u$ of \eqref{min_1d} is essentially piecewise affine.

\newtheorem{affine}[tvlinf_properties]{Proposition}
\begin{affine}[Affine structures]\label{lbl:affine}
Let $(u,w)$ be an optimal solution pair for \eqref{min_1d} and $\phi$ be the corresponding dual function. Then $|w|=\|w\|_{\mathrm{L}^{\infty}(\om)}$ a.e. in the set $\{\phi\ne 0\}$. Moreover, $|u'|=\|w\|_{\mathrm{L}^{\infty}(\om)}$ whenever $u>f$ or $u<f$.
\end{affine}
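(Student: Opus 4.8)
The plan is to exploit the optimality conditions of Theorem~\ref{lbl:optimality}, in particular the pairing condition \eqref{opt3} in the case $w\neq 0$. First I would fix an optimal pair $(u,w)$ together with the associated dual function $\phi\in\mathrm{H}_0^1(\om)$. By \eqref{opt2}, $\phi$ belongs to $\alpha\,\mathrm{Sgn}(Du-w)$; in particular $\|\phi\|_{\mathrm{L}^\infty(\om)}\le\alpha$, and since we are in one dimension $\phi$ is continuous, so the set $\{\phi\neq 0\}$ is open.

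The core step is the first assertion, $|w|=\|w\|_{\mathrm{L}^\infty(\om)}$ a.e.\ on $\{\phi\neq 0\}$. If $w=0$ identically the claim is vacuous (the condition forces $\phi$ to have $\mathrm{L}^1$-norm at most $\beta$, but we would still need $\{\phi\neq0\}$ to be handled; in fact if $w\equiv 0$ one checks separately, or simply notes the statement is about the generic case $w\neq 0$). Assuming $w\neq 0$, condition \eqref{opt3} gives $\langle\phi,w\rangle=\beta\|w\|_{\mathrm{L}^\infty(\om)}$ together with $\|\phi\|_{\mathrm{L}^1(\om)}\le\beta$. Writing $M:=\|w\|_{\mathrm{L}^\infty(\om)}$ and estimating
\[
\beta M=\langle\phi,w\rangle=\int_\om \phi\,w\,dx\le \int_\om |\phi|\,|w|\,dx\le M\int_\om|\phi|\,dx=M\|\phi\|_{\mathrm{L}^1(\om)}\le \beta M,
\]
forces equality throughout. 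Equality in the middle inequality $\int|\phi||w|\le M\int|\phi|$ means $|w|=M$ for $|\phi|\,dx$-a.e.\ $x$, i.e.\ a.e.\ on $\{\phi\neq 0\}$; and equality in $\int\phi w\le\int|\phi||w|$ means $\phi w=|\phi||w|\ge 0$, so $w$ has the same sign as $\phi$ wherever $\phi\neq 0$. This is exactly the first claim.

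For the second assertion I would use \eqref{opt1}, $\phi'=u-f$. On the open set $\{u>f\}\cup\{u<f\}$ we have $\phi'\neq 0$, hence in particular $\phi\neq 0$ on a neighbourhood of (a.e.\ point of) this set, so by the first part $|w|=M$ there. It then remains to transfer this from $w$ to $u'$ via \eqref{opt2}: on the set where $\phi\neq 0$ we have $|\phi|\le\alpha$ with equality needed on the support of $Du-w$; more precisely, $\phi\in\alpha\,\mathrm{Sgn}(Du-w)$ says that $(Du-w)$ restricted to $\{|\phi|<\alpha\}$ vanishes. One then argues that on $\{u\neq f\}$ the dual function $\phi$ attains $|\phi|=\alpha$ is \emph{not} what we want — rather, the point is that where $\phi'=u-f\neq 0$, $\phi$ is strictly monotone and so $\{|\phi|=\alpha\}$ has empty interior there, forcing $Du=w$ (as measures) on $\{u\neq f\}$, whence $u$ is differentiable with $u'=w$ and therefore $|u'|=|w|=M$. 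I expect the routine-but-delicate part to be precisely this last measure-theoretic bookkeeping: combining $\phi\in\alpha\,\mathrm{Sgn}(Du-w)$ with the continuity and strict monotonicity of $\phi$ on $\{u\neq f\}$ to conclude that the singular part of $Du$ and the deviation $Du-w$ both vanish there, so that $u'$ exists classically and equals $w$; the main obstacle is handling the interface between $\{|\phi|=\alpha\}$ and $\{|\phi|<\alpha\}$ cleanly.
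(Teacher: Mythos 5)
Your argument is correct and takes essentially the same route as the paper: the first claim is the same extremality-in-H\"older computation based on \eqref{opt3} (the paper phrases it as a contradiction on a set $U\subseteq\{\phi\ne 0\}$ where $|w|<\|w\|_{\mathrm{L}^{\infty}(\om)}$), and the second claim follows, as in the paper, from the inclusion $\{u\ne f\}\subseteq\{\phi\ne 0\}$ up to null sets together with $u'=w$ on $\{u\ne f\}$, which the paper simply imports from the analogous $\mathrm{TGV}$ result \cite[Prop.~4.2]{BrediesL1} while you sketch it. One small repair: $\{u>f\}\cup\{u<f\}$ need not be open and ``$\phi'\ne 0$ implies $\phi\ne 0$ on a neighbourhood'' is not a valid inference; the correct justification is that the derivative of an $\mathrm{H}^{1}$ function vanishes a.e.\ on its zero level set, so $\{\phi=0\}\subseteq\{\phi'=0\}=\{u=f\}$ up to a null set --- exactly the ``up to null sets'' inclusion the paper invokes.
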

\begin{proof}
Suppose that there exists a $U\subseteq \{\phi\ne 0\}$ of positive measure such that $|w(x)|<\|w\|_{\mathrm{L}^{\infty}(\om)}$ for every $x\in U$. Then 
\begin{align*}
\int_{\om}\phi w\,dx &\le \int_{\om\setminus U} |\phi||w|\,dx +\int_{U}|\phi||w|\,dx<\|w\|_{\mathrm{L}^{\infty}(\om)}\left (\int_{\om\setminus U}|\phi|\,dx + \int_{U}|\phi|\,dx\right )\\
				&=\|w\|_{\mathrm{L}^{\infty}(\om)}\|\phi\|_{\mathrm{L}^{1}(\om)}=\beta\|w\|_{\mathrm{L}^{\infty}(\om)},
\end{align*}
where we used the fact that $\|\phi\|_{\mathrm{L}^{1}(\om)}\le \beta$ from \eqref{opt3}. However this contradicts the fact that $\langle\phi, w \rangle=\beta\|w\|_{\mathrm{L}^{\infty}(\om)}$ also from \eqref{opt3}. Note also that from \eqref{opt1} we have that $\{u>f\}\cup \{u<f\}\subseteq \{\phi\ne 0\}$ up to null sets. Thus, the last statement of the proposition follows from the fact that whenever $u>f$ or $u<f$ then $u'=w$ there. This last fact can be shown exactly as in the corresponding $\mathrm{TGV}$ problems, see \cite[Prop. 4.2]{BrediesL1}.
\qed
\end{proof}

Piecewise affinity is typically a characteristic of higher order regularisation models, e.g. $\mathrm{TGV}$. Indeed, as the next proposition shows, $\mathrm{TGV}$ and $\mathrm{TVL}^{\infty}$ regularisation coincide in some simple special cases.

\newtheorem{equi}[tvlinf_properties]{Proposition}
\begin{equi}\label{lbl:equi}
The one dimensional functionals $\mathrm{TGV}_{\alpha,\beta}^{2}$ and $\mathrm{TVL}_{\alpha,2\beta}^{\infty}$ coincide in the class of those $\mathrm{BV}$ functions $u$, for which an optimal $w$ in both definitions of $\mathrm{TGV}$ and $\mathrm{TVL}^{\infty}$ is odd and monotone.
\end{equi}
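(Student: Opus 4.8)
The plan is to reduce both sides to a common variational expression. In one dimension the space $\mathrm{BD}(\om)$ coincides with $\mathrm{BV}(\om)$ and the symmetrised gradient $\mathcal{E}w$ is just the distributional derivative $Dw$, so that $\mathrm{TGV}_{\alpha,\beta}^{2}(u)=\min_{w\in\mathrm{BV}(\om)}\alpha\|Du-w\|_{\mathcal{M}}+\beta\,\mathrm{TV}(w)$. The elementary identity driving everything is that for a function $w$ on a symmetric interval $\om=(-L,L)$ that is simultaneously odd and monotone one has $\mathrm{TV}(w)=2\|w\|_{\mathrm{L}^{\infty}(\om)}$: monotonicity gives $\mathrm{TV}(w)=\operatorname{ess\,sup}w-\operatorname{ess\,inf}w$, and oddness forces $\operatorname{ess\,sup}w=-\operatorname{ess\,inf}w=\|w\|_{\mathrm{L}^{\infty}(\om)}$. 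Hence, \emph{restricted to odd monotone $w$}, the $\mathrm{TGV}$ objective $\alpha\|Du-w\|_{\mathcal{M}}+\beta\,\mathrm{TV}(w)$ and the $\mathrm{TVL}_{\alpha,2\beta}^{\infty}$ objective $\alpha\|Du-w\|_{\mathcal{M}}+2\beta\|w\|_{\mathrm{L}^{\infty}(\om)}$ take exactly the same value.

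I would then obtain the equality by a two-sided admissibility argument. For $\mathrm{TGV}_{\alpha,\beta}^{2}(u)\ge \mathrm{TVL}_{\alpha,2\beta}^{\infty}(u)$: pick the optimal $w$ for $\mathrm{TGV}$, which by hypothesis is odd and monotone; since $\mathrm{BV}(\om)\hookrightarrow\mathrm{L}^{\infty}(\om)$ in one dimension this $w$ is admissible in \eqref{tvlinf}, and by the identity above the $\mathrm{TVL}_{\alpha,2\beta}^{\infty}$ objective at $w$ equals $\mathrm{TGV}_{\alpha,\beta}^{2}(u)$, which is therefore $\ge\mathrm{TVL}_{\alpha,2\beta}^{\infty}(u)$. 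For the converse: pick the optimal $w$ for $\mathrm{TVL}_{\alpha,2\beta}^{\infty}$, again odd and monotone by hypothesis; a bounded monotone function on a bounded interval belongs to $\mathrm{BV}(\om)=\mathrm{BD}(\om)$, so $w$ is admissible in \eqref{def:tgv}, and the same identity gives $\mathrm{TGV}_{\alpha,\beta}^{2}(u)\le \mathrm{TVL}_{\alpha,2\beta}^{\infty}(u)$. Both minima are attained (Proposition~\ref{lbl:tvlinf_properties}(ii) for $\mathrm{TVL}^{\infty}$, the standard direct method for $\mathrm{TGV}$), so combining the two inequalities completes the argument.

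The routine ingredients are the one-dimensional embedding $\mathrm{BV}\hookrightarrow\mathrm{L}^{\infty}$ and the fact that bounded monotone functions are of bounded variation. The only genuinely delicate step is the identity $\mathrm{TV}(w)=2\|w\|_{\mathrm{L}^{\infty}(\om)}$: one must use that $\mathrm{TV}(w)=|Dw|(\om)$ measures variation strictly \emph{inside} $\om$ (no boundary jump is counted), so for monotone $w$ it equals the difference $w(L^{-})-w(-L^{+})$ of the one-sided limits, and that oddness is invoked in the almost-everywhere sense to identify these limits with $\pm\|w\|_{\mathrm{L}^{\infty}(\om)}$. One should also note that the statement tacitly assumes $\om$ symmetric about the origin for "odd" to be meaningful, and that the conclusion is insensitive to whether the optimal $w$ is nondecreasing or nonincreasing.
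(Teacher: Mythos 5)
Your proof is correct and follows essentially the same route as the paper's: the key identity $\|Dw\|_{\mathcal{M}}=2\|w\|_{\mathrm{L}^{\infty}(\om)}$ for odd monotone bounded $w$, combined with the hypothesis that the optimisers of both functionals lie in that class, so that the two objectives agree where it matters. Your two-sided admissibility argument is simply a more carefully spelled-out version of the paper's one-line chain of equalities, and your remarks on the boundary behaviour of the variation and the implicit symmetry of $\om$ are sensible refinements rather than departures.
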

\begin{proof}
Note first that for every odd and monotone bounded function $w$ we have $\|Dw\|_{\mathcal{M}}=2\|w\|_{\mathrm{L}^{\infty}(\om)}$ and denote this set of functions by $\mathcal{A}\subseteq \mathrm{BV}(\om)$. For a $\mathrm{BV}$ function $u$ as in the statement of the proposition we have
\begin{align*}
\mathrm{TGV}_{\alpha,\beta}^{2}(u)&=\underset{w\in \mathcal{A}}{\operatorname{argmin}}\; \alpha\|Du-w\|_{\mathcal{M}}+\beta\|Dw\|_{\mathcal{M}}\\
&=\underset{w\in \mathcal{A}}{\operatorname{argmin}}\; \alpha\|Du-w\|_{\mathcal{M}}+2\beta\|w\|_{\mathrm{L}^{\infty}(\om)}
= \mathrm{TVL}_{\alpha,2\beta}^{\infty}(u). 
\end{align*}
\qed
\end{proof}
\textbf{Exact solutions}: We present exact solutions for the minimisation problem \eqref{min_1d}, for two simple functions $f,g:(-L,L)\to \mathbb{R}$ as data, where $f(x)=h\mathcal{X}_{(0,L)}(x)$ and $g(x)=f(x)+\lambda x$, with $\lambda,h>0$. Here $\mathcal{X}_{C}(x)=1$ for $x\in C$ and $0$ otherwise.

\begin{figure}[t!]
\begin{center}
\begin{subfigure}[t]{0.45\textwidth}
	\centering
	\includegraphics[height=0.94\textwidth]{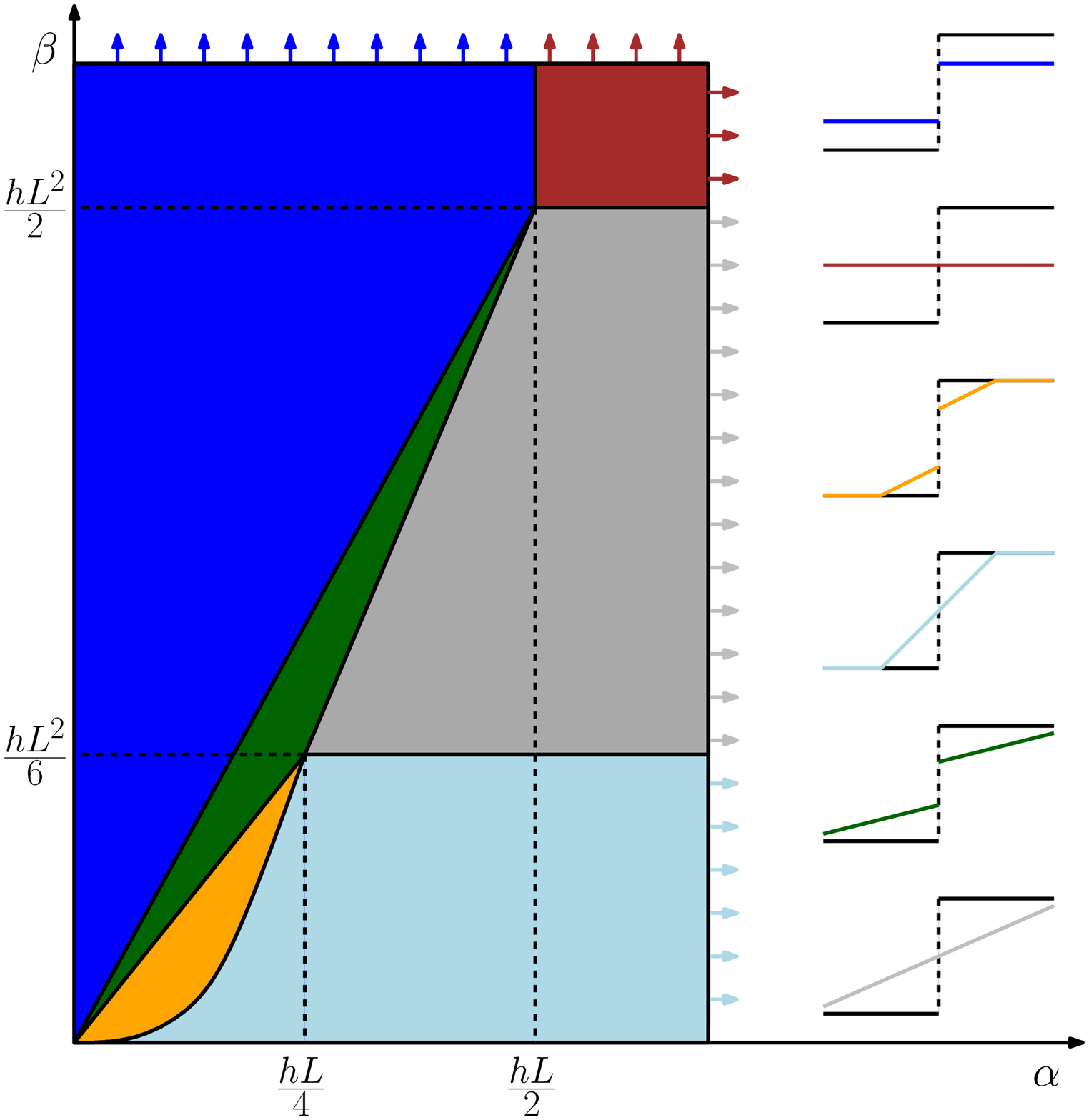}
	\caption{\centering Piecewise constant data function $f$}
	\label{fig:exact:a}
\end{subfigure}
\begin{subfigure}[t]{0.45\textwidth}
	\centering
	\includegraphics[height=0.94\textwidth]{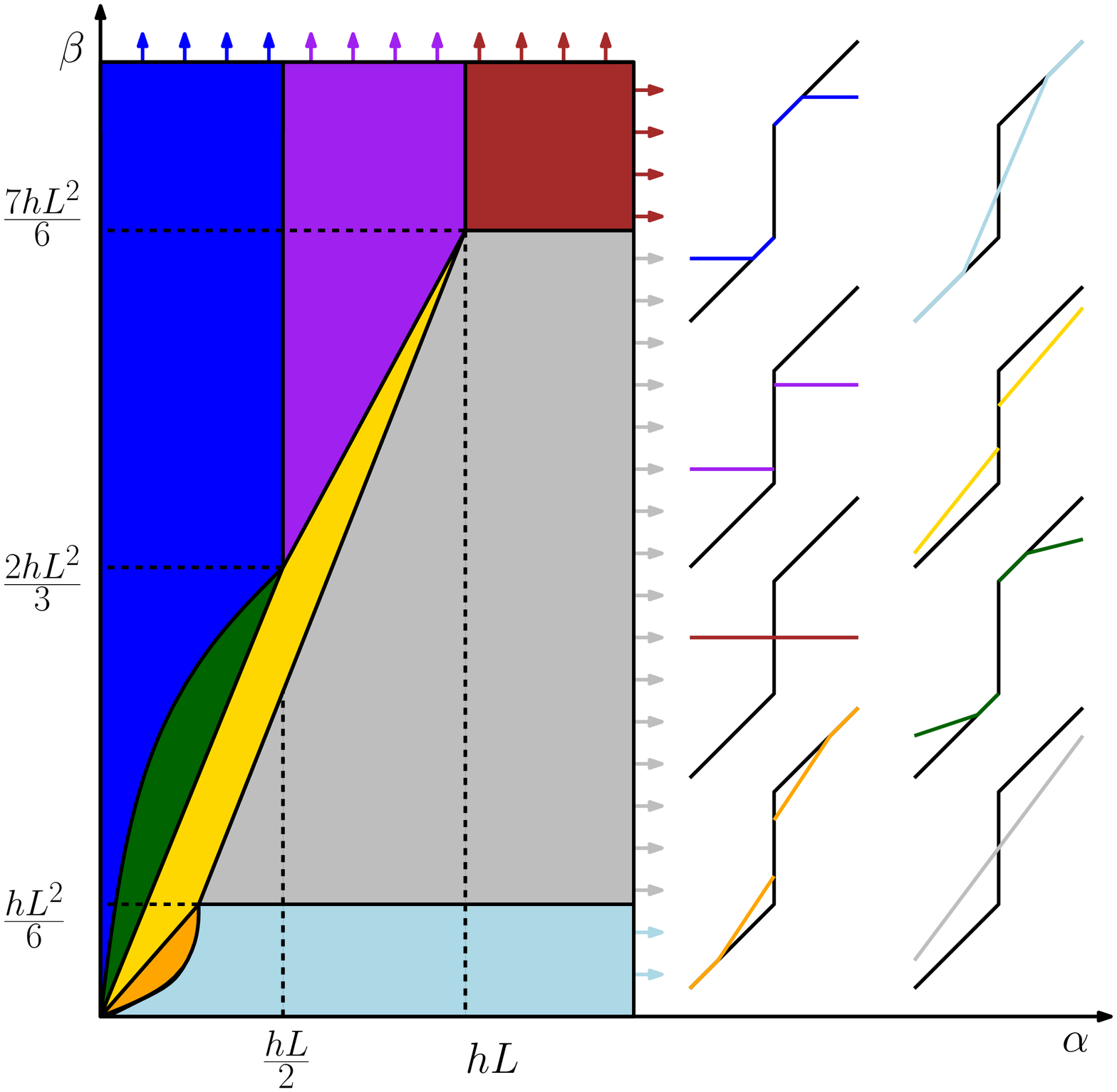}
	\caption{\centering Piecewise affine data function $g$}
	\label{fig:exact:b}
\end{subfigure}\\[0.8cm]
\caption{Exact solutions for the $\mathrm{L}^{2}$--$\mathrm{TVL}_{\alpha,\beta}^{\infty}$ one dimensional denoising problem}
\label{fig:exact}
\end{center}
\end{figure}

With the help of the optimality conditions \eqref{opt1}--\eqref{opt3} we are able to compute all possible solutions of \eqref{min_1d} for data $f$ and $g$ and for all values of $\alpha$ and $\beta$. These solutions are depicted in Figure \ref{fig:exact}. Every coloured region corresponds to a different type of solution. Note that there are regions where the solutions coincide with the corresponding solutions of  $\mathrm{TV}$ minimisation, see the blue and  red regions in Figure \ref{fig:exact:a} and the blue, purple  and red regions in Figure \ref{fig:exact:b}. This is not surprising since as it is shown in \cite{tvlp} for all dimensions, $\mathrm{TVL}_{\alpha,\beta}^{\infty}=\alpha\mathrm{TV}$, whenever $\beta/\alpha\ge |\om|^{1/q}$ with $1/p+1/q=1$ and $p\in (1,\infty]$. Notice also the presence of affine structures in all solutions, predicted by Proposition \ref{lbl:affine}. For demonstration purposes, we present the computation of the exact solution that corresponds to the yellow region of Figure \ref{fig:exact:b} and refer to \cite{papoutsellisphd} for the rest. 

Since we require a piecewise affine solution, from symmetry and \eqref{opt1} we have that $\phi(x)=(c_{1}-\lambda)\frac{x^{2}}{2}-c_{2}|x|+c_{3}$. Since we require $u$ to have a discontinuity at $0$, \eqref{opt2} implies $\phi(0)=a$ while from the fact that $\phi\in \mathrm{H}_{0}^{1}(\om)$ and from \eqref{opt3} we must have $\phi(-L)=0$ and $\langle \phi,w \rangle=\beta \|w\|_{\mathrm{L}^{\infty}(\om)}$. These conditions give
\[c_{1}=\frac{6(\alpha L-\beta)}{L^{3}}+\lambda,\quad c_{2}=\frac{4\alpha L-3\beta}{L^{2}},\quad c_{3}=\alpha.\]
We also have $c_{1}=u'=w$ and thus we require $c_{1}>0$. Since we have a jump at $x=0$ we also require $g(0)<u(0)<h$, i.e., $0<c_{2}<\frac{h}{2}$ and  $u(-L)>g(-L)$ i.e., $\phi'(-L)>0$. These last inequalities are translated to the following conditions
\[\left \{\beta<\alpha L+\frac{\lambda L^{3}}{6},\; \beta>\frac{4\alpha L}{3}-\frac{hL^{2}}{6},\; \beta>\frac{2\alpha L}{3},\; \beta<\frac{4\alpha L}{3} \right \},\]
which define the yellow area in  Figure \ref{fig:exact:b}. We can easily compute $u$ now:
\[
u(x)=
\begin{cases}
\big (\frac{6(\alpha L -\beta)}{L^{3}} +\lambda\big )x+h-\frac{4\alpha L-3\beta}{L^{2}},& \; x\in(0,L),\\
\big (\frac{6(\alpha L -\beta)}{L^{3}} +\lambda\big )x+\frac{4\alpha L-3\beta}{L^{2}},& \; x\in(-L,0).
\end{cases}
\]
Observe that when $\beta=\alpha L$, apart from the discontinuity, we can also recover the slope of the data $g'=\lambda$, something that neither  $\mathrm{TV}$ nor  $\mathrm{TGV}$ regularisation can give for this example, see \cite[Section 5.2]{Papafitsoros_Bredies}.

\section{Numerical Experiments}

In this section we present our numerical experiments for the discretised version of $\mathrm{L}^{2}$--$\mathrm{TVL}_{\alpha,\beta}^{\infty}$ denoising. We solve \eqref{min_1d} using the split Bregman algorithm, see \cite[Chapter 4]{papoutsellisphd} for more details. 

%The corresponding subproblems are solved exactly, i.e., a linear system solved by discrete cosine transform, a shrinkage operator and a projection onto the $\ell^{1}$ ball, . Also, for the spatially adapted $\mathrm{TVL}^{\infty}$ case discussed below, we use a weighted $\ell_{1}$ ok
%projection, see \cite{weighted_l1_Kopsinis}.

\begin{figure}[t!]
\begin{center}
\begin{subfigure}[t]{0.4\textwidth}
	\centering
	\includegraphics[width=0.9\textwidth]{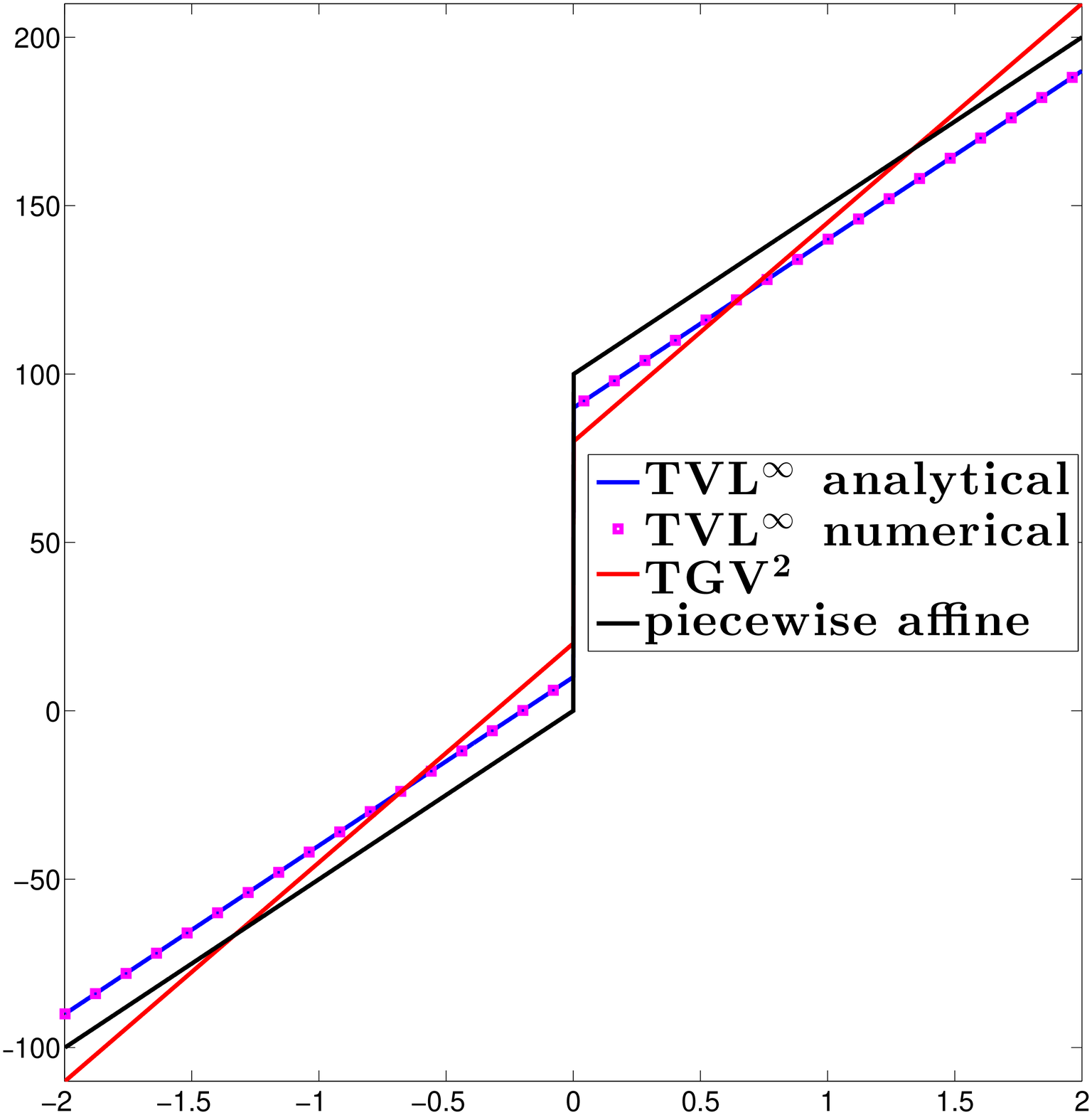}
	\caption{\centering Exact $\mathrm{TVL}^{\infty}$ and $\mathrm{TGV}$ solutions for the piecewise affine function $g$}
	\label{fig:1d_1}
\end{subfigure}
\begin{subfigure}[t]{0.4\textwidth}
	\centering
	\includegraphics[width=0.9\textwidth]{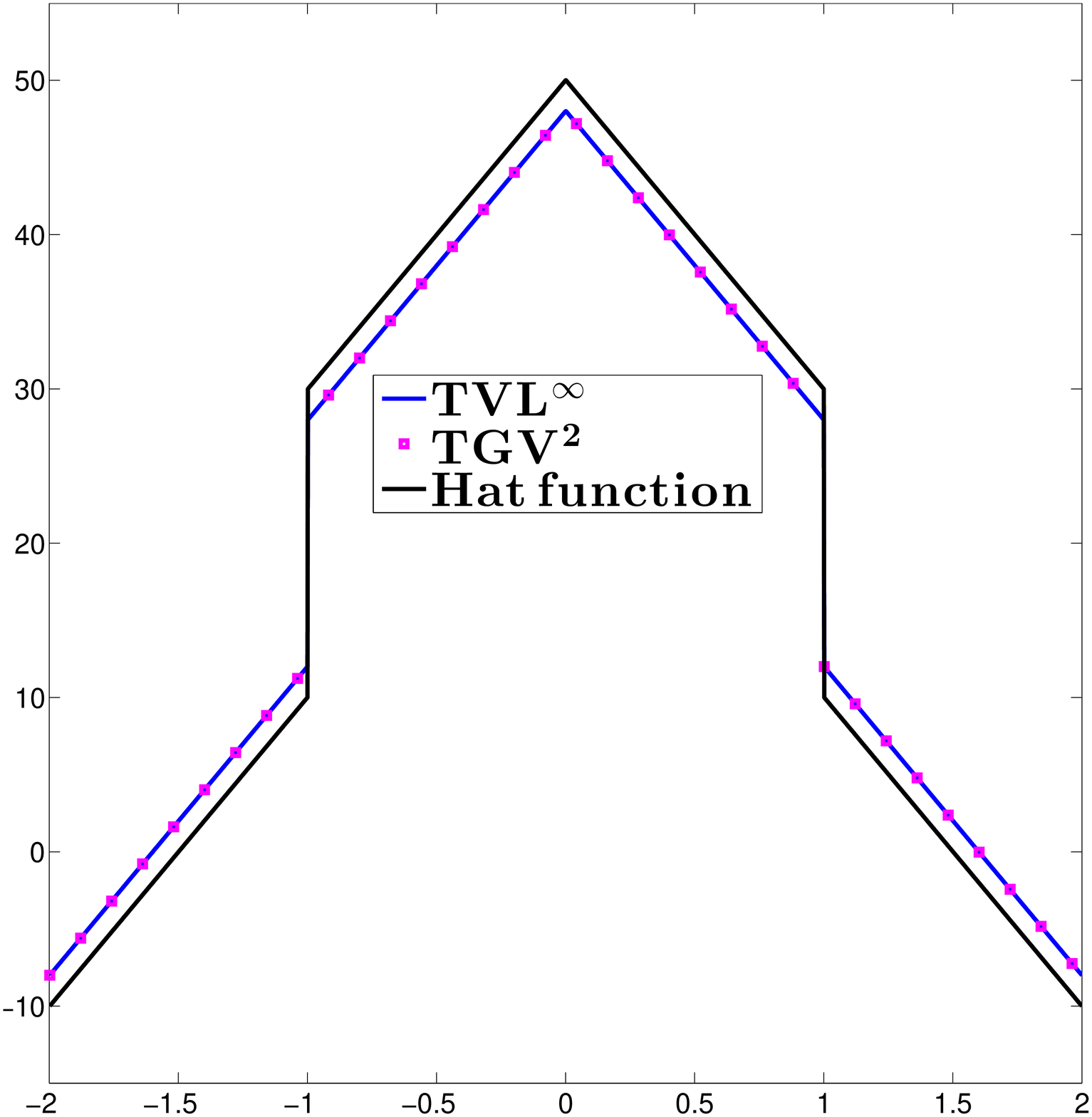}
	\caption{\centering Equivalence of $\mathrm{TVL}^{\infty}$ and $\mathrm{TGV}$ regularisation predicted by Proposition \ref{lbl:equi}}
	\label{fig:1d_2}
\end{subfigure}\\[0.8cm]
\caption{One dimensional numerical examples}
\label{fig:exactnum}
\end{center}
\end{figure}

First we present some one dimensional  examples that verify numerically our analytical results. Figure \ref{fig:1d_1}  shows the $\mathrm{TVL}^{\infty}$ result for the function $g(x)=h\mathcal{X}_{(0,L)}(x)+\lambda x$ where $\alpha, \beta$ belong to the yellow region of Figure \ref{fig:exact:b}. Note that the numerical and the analytical results coincide. We have also computed the $\mathrm{TGV}$ solution where the parameters are selected so that $\|f-u_{\mathrm{TGV}}\|_{2}=\|f-u_{\mathrm{TVL}^{\infty}}\|_{2}$.
 Figure \ref{fig:1d_2} shows a numerical verification of Proposition \ref{lbl:equi}. There, the $\mathrm{TVL}^{\infty}$ parameters are $\alpha=2$ and $\beta=4$, while the $\mathrm{TGV}$ ones are $\alpha=2$ and $\beta=2$. Both solutions coincide since they satisfy the symmetry properties of the proposition.

\begin{figure}[t!]
\begin{center}
\begin{subfigure}[t]{0.195\textwidth}
	\centering
	\includegraphics[width=0.9\textwidth]{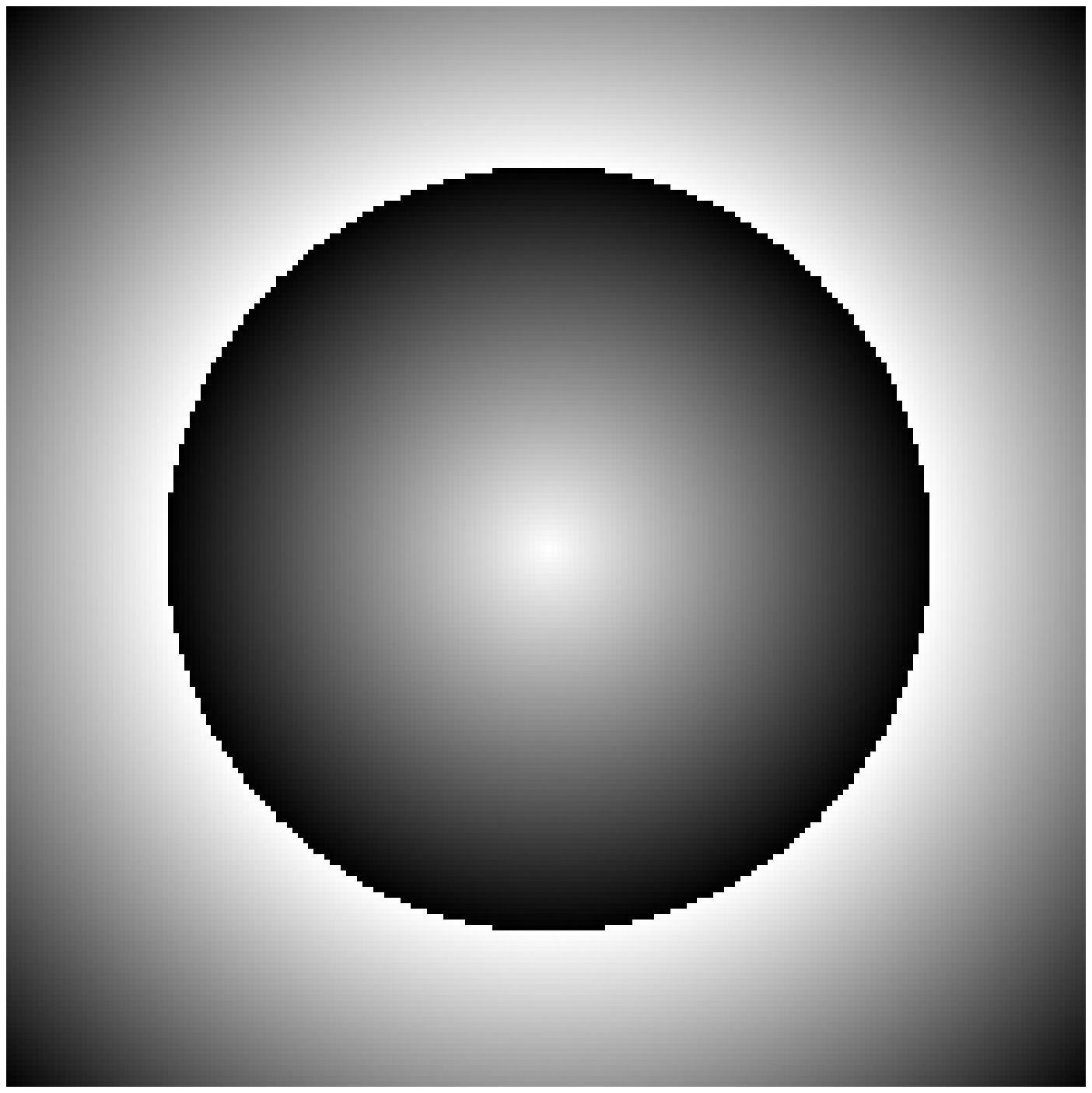}
	\label{fig:circle_clean}
	\caption{\centering Original synthetic image}
	\label{fig:circle:clean}
\end{subfigure}
\begin{subfigure}[t]{0.195\textwidth}
	\centering
	\includegraphics[width=0.9\textwidth]{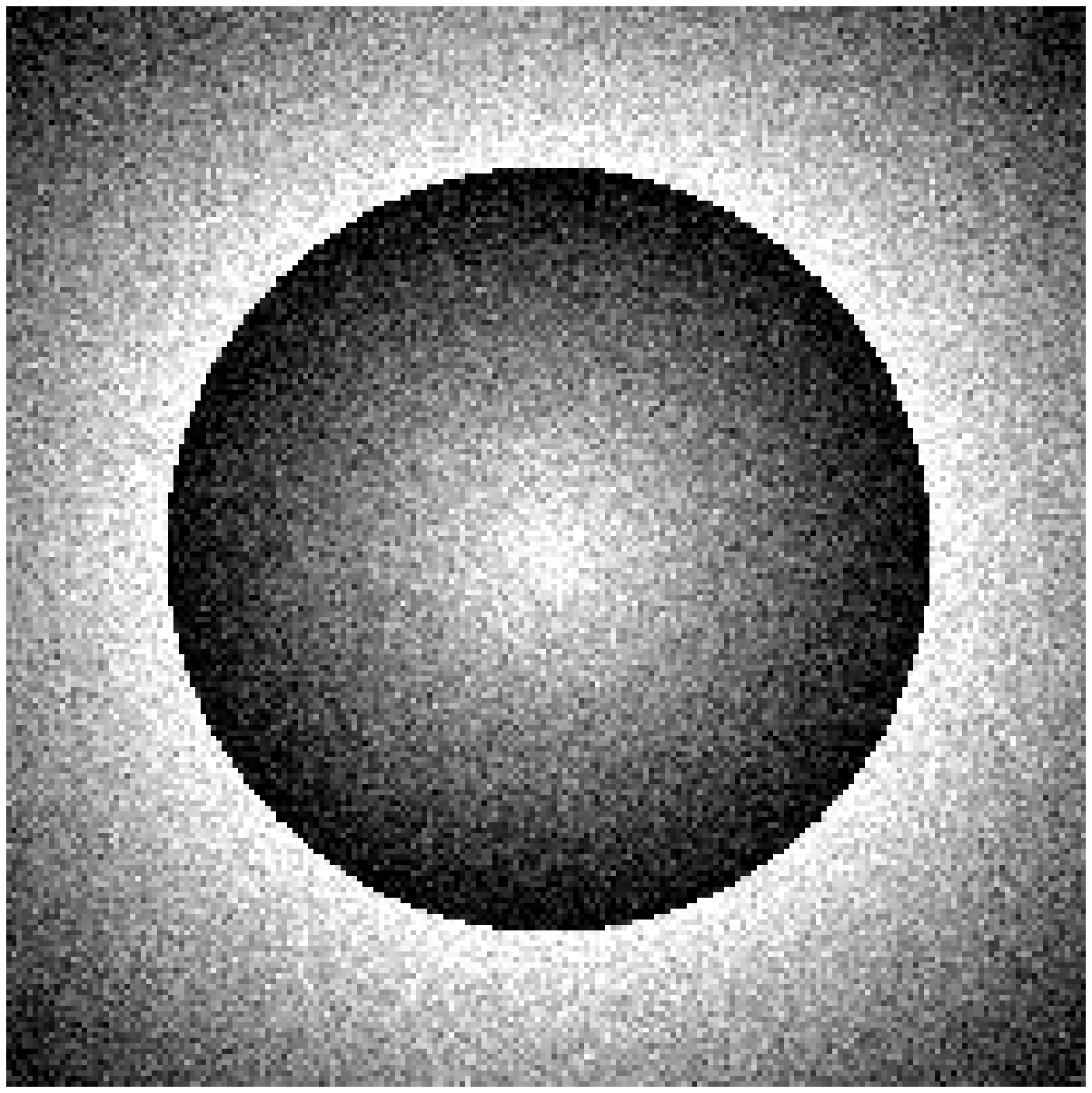}
	\label{fig:circle_noisy}
	\caption{\centering Noisy, Gaussian noise, $\sigma=0.01$ $\mathrm{SSIM}=0.2457$}
	\label{fig:circle:noisy}
\end{subfigure}
\begin{subfigure}[t]{0.195\textwidth}
	\centering
	\includegraphics[width=0.9\textwidth]{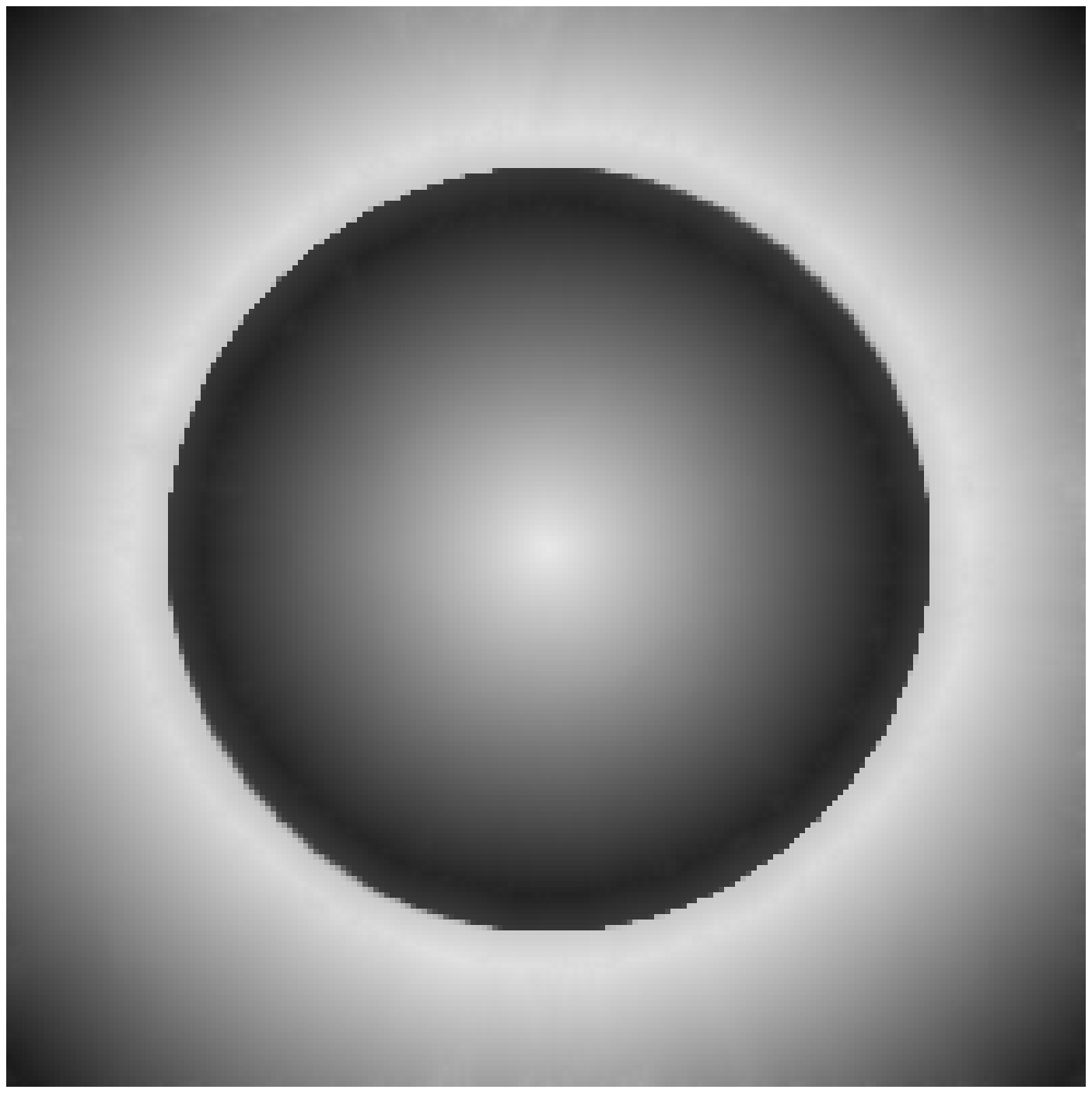}
	\label{fig:circle_tvlinf}
	\caption{\centering $\mathrm{TVL}^{\infty}$: $\alpha=0.7$, $\beta$=14000, $\mathrm{SSIM}			=0.9122$}
	\label{fig:circle:tvlinf}
\end{subfigure}
\begin{subfigure}[t]{0.19\textwidth}
	\centering
	\includegraphics[width=0.9\textwidth]{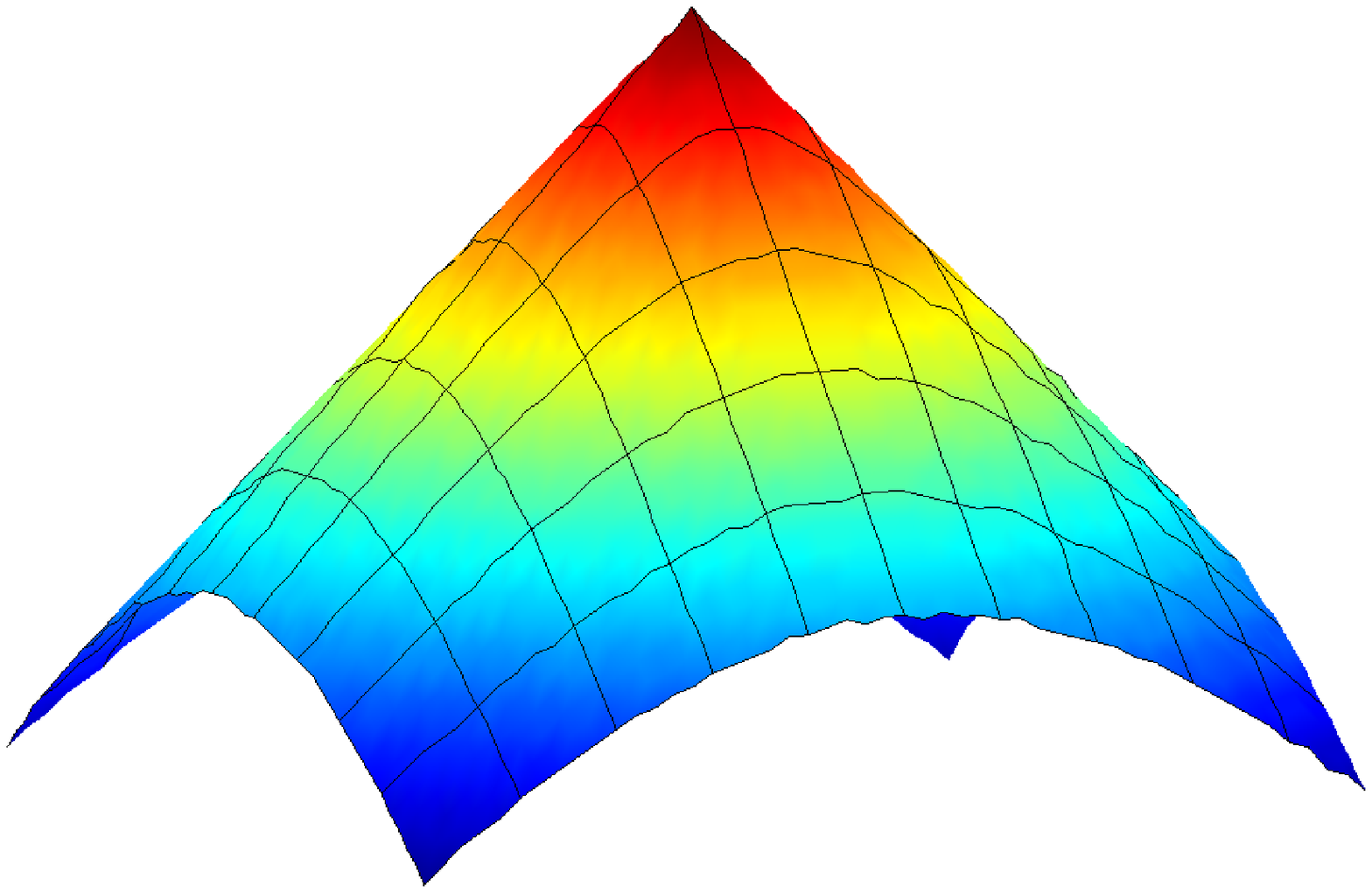}
	\label{fig:circle_tgvbreg_surf}
	\caption{\centering Original surface plot, central part zoom}
	\label{fig:circle:cleansurf}
\end{subfigure}
\begin{subfigure}[t]{0.19\textwidth}
	\centering
	\includegraphics[width=0.9\textwidth]{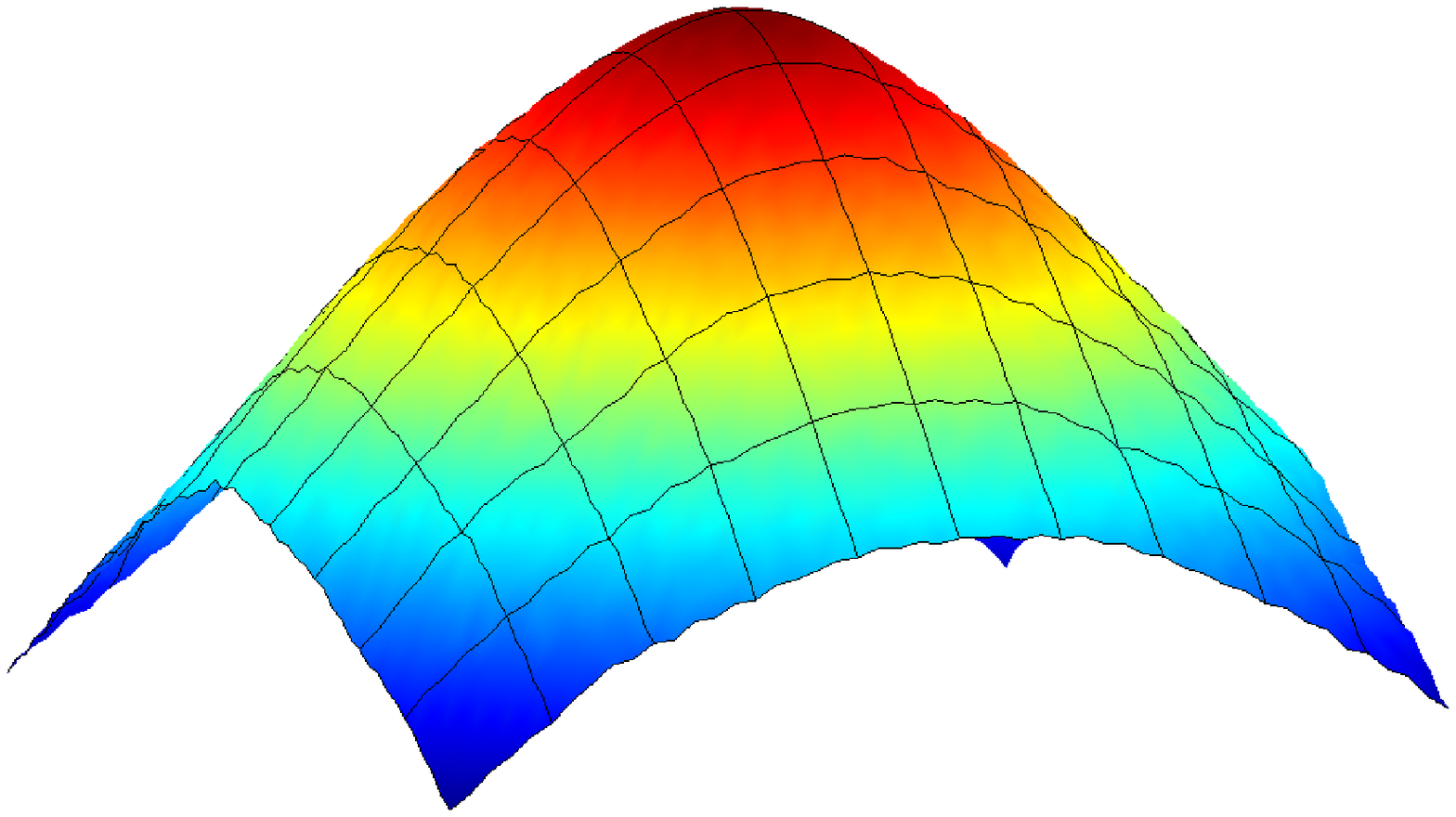}
	\label{fig:circle_tgvbreg_surf}
	\caption{\centering Bregman iteration $\mathrm{TGV}$ \newline  surface plot, central part zoom}
	\label{fig:circle:tgvsurf}
\end{subfigure}
\\[1cm]
\begin{subfigure}[t]{0.195\textwidth}
	\centering
	\includegraphics[width=0.9\textwidth]{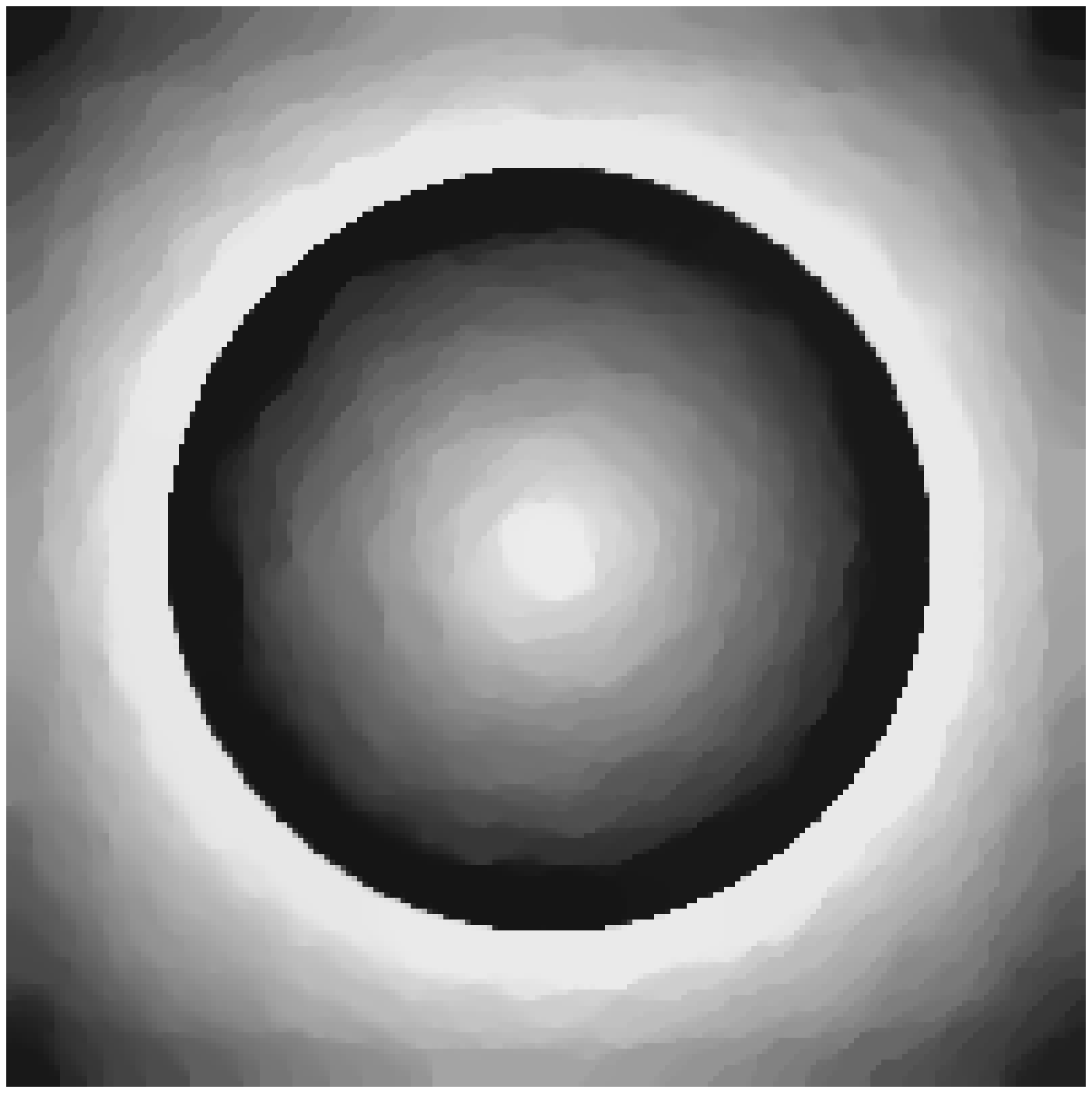}
	\label{fig:circle_tvbreg}
	\caption{\centering Bregman iteration $\mathrm{TV}$: $\alpha=2$, $\mathrm{SSIM}=0.8912$}
	\label{fig:circle:tv}
\end{subfigure}
\begin{subfigure}[t]{0.195\textwidth}
	\centering
	\includegraphics[width=0.9\textwidth]{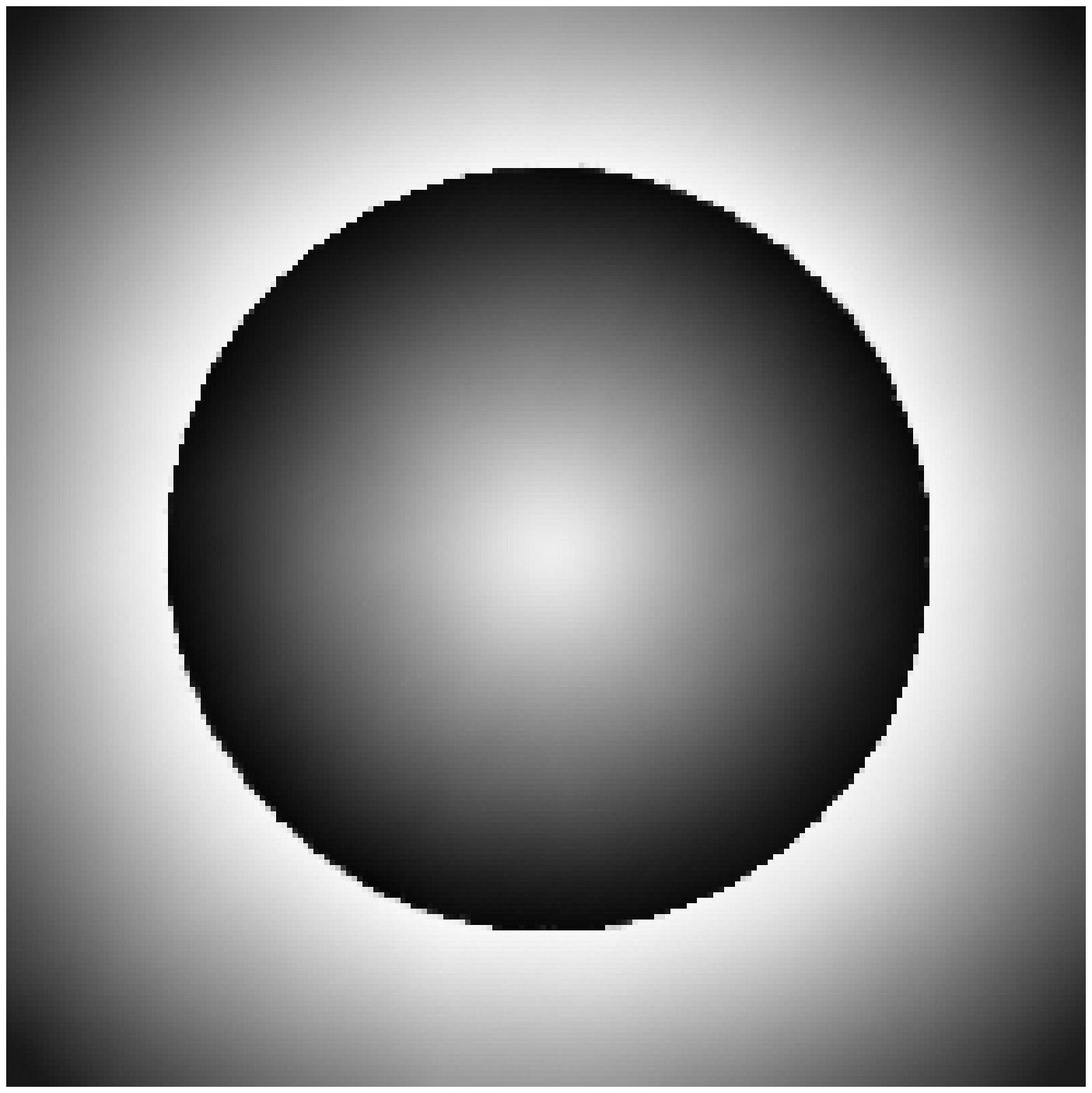}
	\label{fig:circle_tgvbreg}
	\caption{\centering Bregman iteration $\mathrm{TGV}$:\newline $\alpha=2$, $\beta=10$, $\mathrm{SSIM}			=0.9913$}
	\label{fig:circle:tgv}
\end{subfigure}
\begin{subfigure}[t]{0.195\textwidth}
	\centering
	\includegraphics[width=0.9\textwidth]{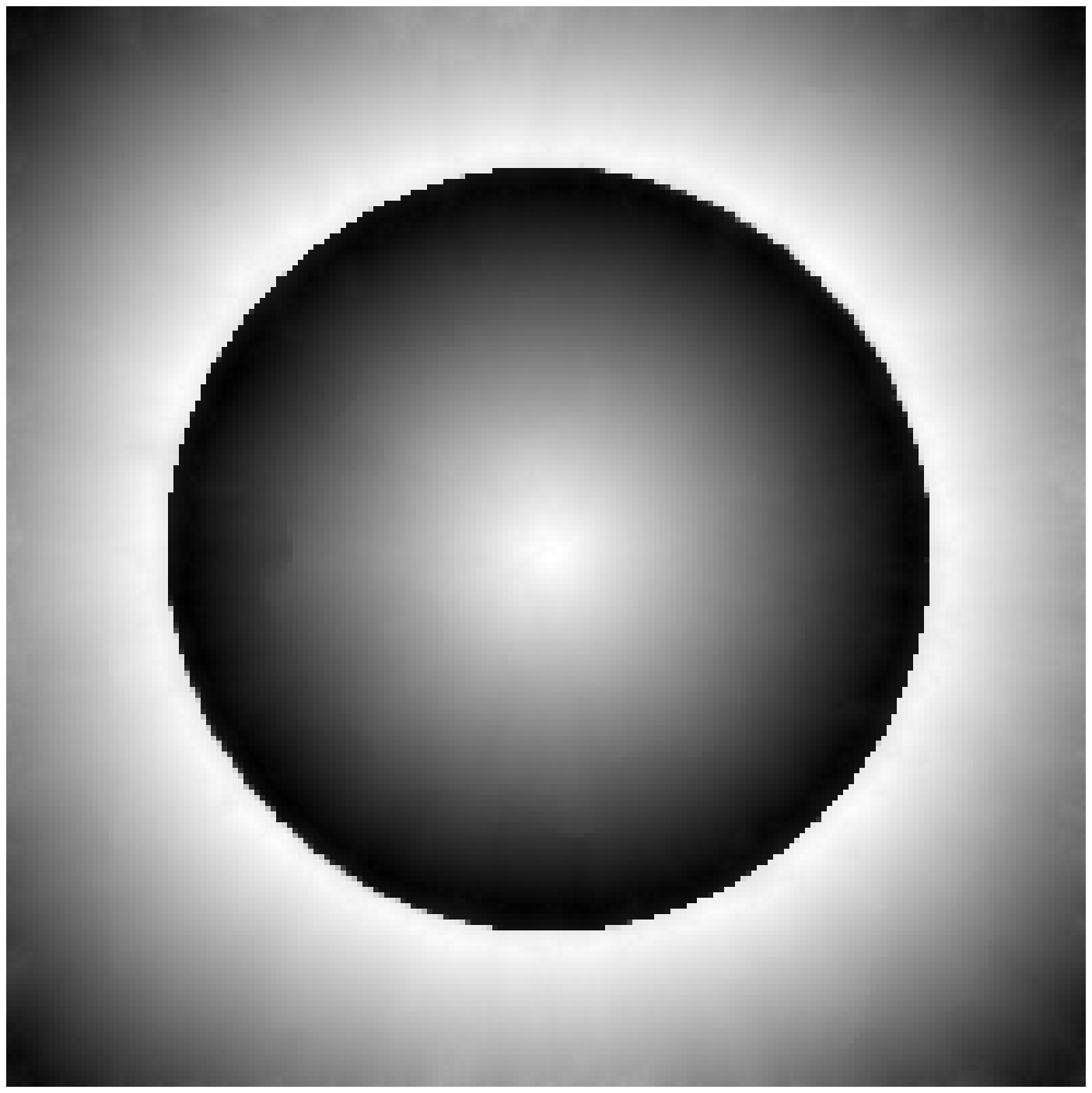}
	\label{fig:fig:circle_tvlinfbreg}
	\caption{\centering Bregman iteration $\mathrm{TVL}^{\infty}$: \newline $\alpha=3$, $\beta=65000$, \newline$				\mathrm{SSIM}=0.9828$}
	\label{fig:circle:tvlinfbreg}
\end{subfigure}
\begin{subfigure}[t]{0.19\textwidth}
	\centering
	\includegraphics[width=0.9\textwidth]{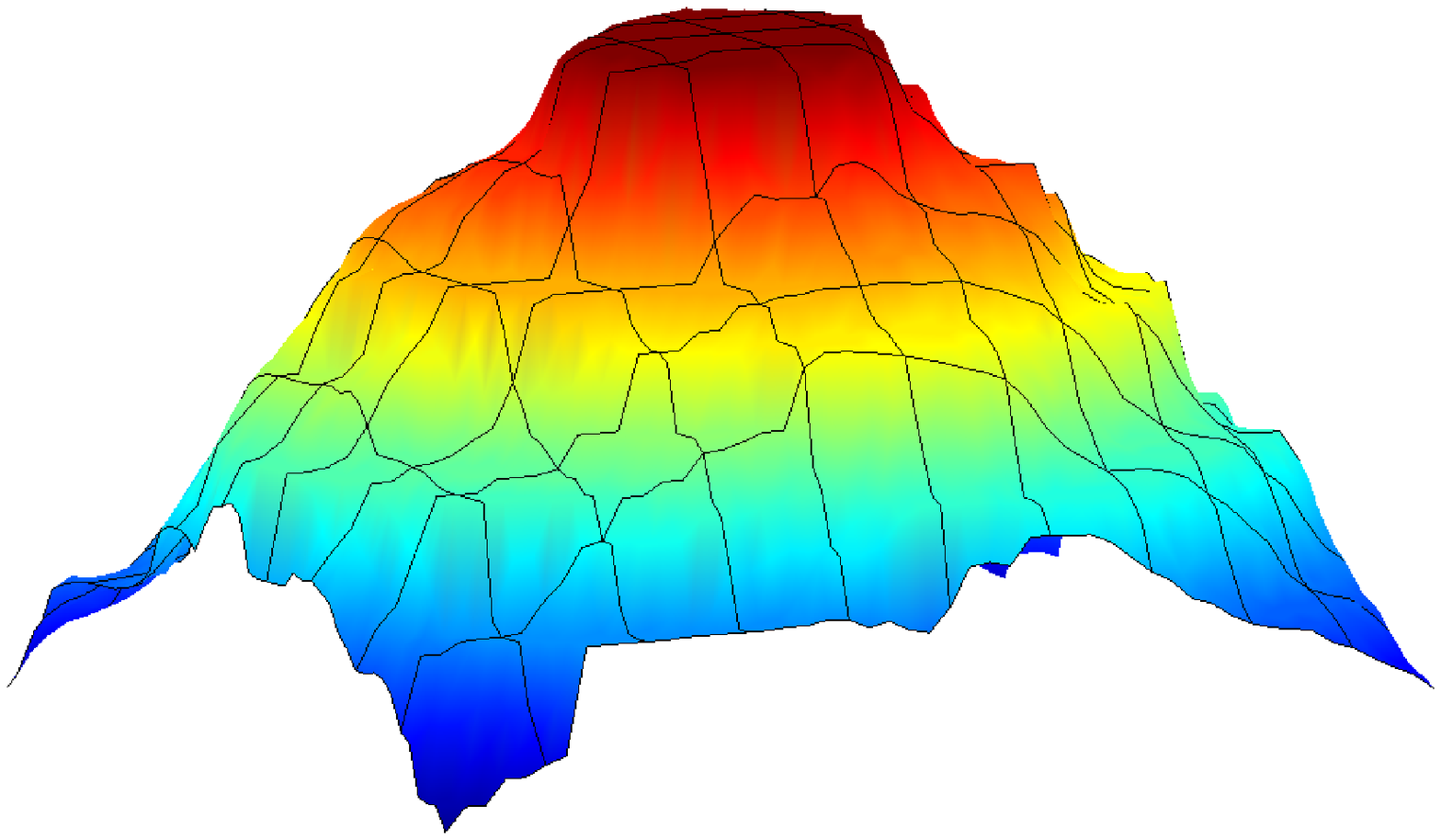}		
	\label{fig:fig:circle_tvlinfbreg_surf}
	\caption{\centering Bregman iteration $\mathrm{TV}$ surface plot, central part zoom}
	\label{fig:circle:tvsurf}
\end{subfigure}
\begin{subfigure}[t]{0.19\textwidth}
	\centering
	\includegraphics[width=0.9\textwidth]{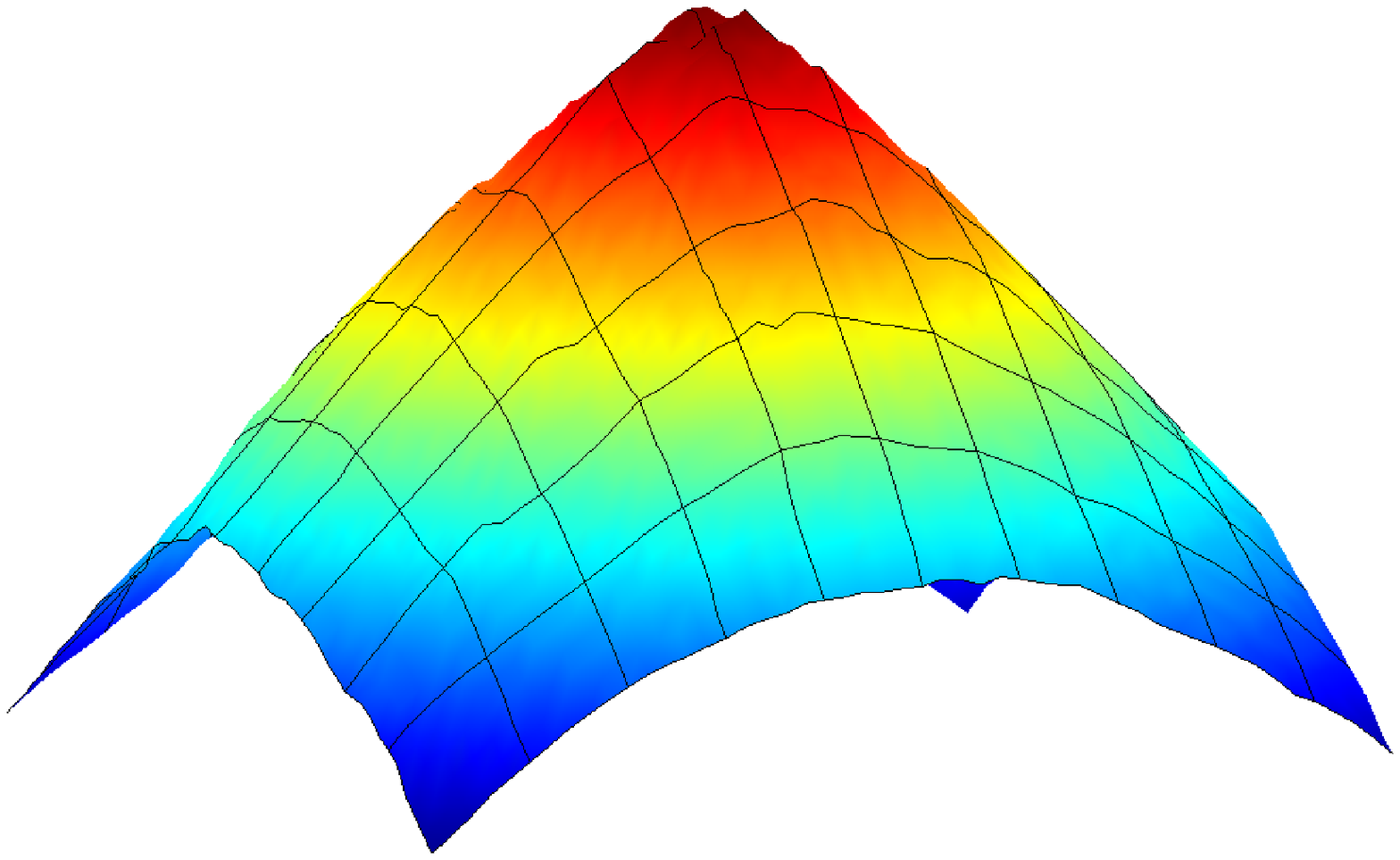}		
	\label{fig:fig:circle_tvlinfbreg_surf}
	\caption{\centering Bregman iteration $\mathrm{TVL}^{\infty}$ surface plot, central part zoom}
	\label{fig:circle:tvlinfsurf}
\end{subfigure}\\[0.8cm]
\caption{$\mathrm{TVL}^{\infty}$ based denoising and comparison with the corresponding $\mathrm{TV}$ and $\mathrm{TGV}$ results. All the parameters have been optimised for best SSIM.} %\cite{wang2004image}}
\label{fig:circle}
\end{center}
\end{figure}

We proceed now to two dimensional examples, starting from  Figure \ref{fig:circle}. There we used a synthetic image corrupted by additive Gaussian noise of $\sigma=0.01$, cf. Figures \ref{fig:circle:clean}--\ref{fig:circle:noisy}. We observe that  $\mathrm{TVL}^{\infty}$  denoises the image in a staircasing--free way in Figure \ref{fig:circle:tvlinf}. In order to do so however, one has to use large values of $\alpha$ and $\beta$ something that leads to a loss of contrast. This can easily be treated by solving the \emph{Bregman iteration} version of $\mathrm{L}^{2}$--$\mathrm{TVL}_{\alpha,\beta}^{\infty}$ minimisation, that is
\begin{equation}
\begin{aligned}
u^{k+1}&=\underset{u,w}{\operatorname{argmin}}\; \frac{1}{2} \|f-v^{k}-u\|_{2}^{2}+\alpha \|\nabla u -w\|_{1}+\beta \|w\|_{\infty},\\
v^{k+1}&=v^{k}+f-u^{k+1}.
\end{aligned}
\label{bregmanised}
\end{equation}
Bregman iteration has been widely used to deal with the loss of contrast in these type of regularisation methods, see \cite{TGVbregman,OBG} among others. For fair comparison we also employ the Bregman iteration version of $\mathrm{TV}$ and $\mathrm{TGV}$ regularisations. The Bregman iteration version of $\mathrm{TVL}^{\infty}$ regularisation produces visually a very similar result to the Bregman iteration version of $\mathrm{TGV}$, even though it has a slightly smaller SSIM value, cf. Figures \ref{fig:circle:tgv}--\ref{fig:circle:tvlinfbreg}. However,  $\mathrm{TVL}^{\infty}$  is able to reconstruct better the sharp spike at the middle of the figure, cf. Figures \ref{fig:circle:cleansurf}, \ref{fig:circle:tgvsurf} and \ref{fig:circle:tvlinfsurf}.

The reason for being able to obtain good reconstruction results with this particular example is due to the fact that the modulus of the gradient is essentially constant apart from the jump points. This is favoured by the $\mathrm{TVL}^{\infty}$ regularisation which promotes constant gradients as it is proved rigorously in dimension one in Proposition \ref{lbl:affine}. We expect that a similar analytic result holds in higher dimensions and we leave that for future work. However, this  is restrictive when gradients of different magnitude exist, see Figure \ref{fig:square}.
\begin{figure}[t!]
\begin{center}
\begin{subfigure}[t]{0.193\textwidth}
                \centering                                                  
                \includegraphics[width=0.9\textwidth]{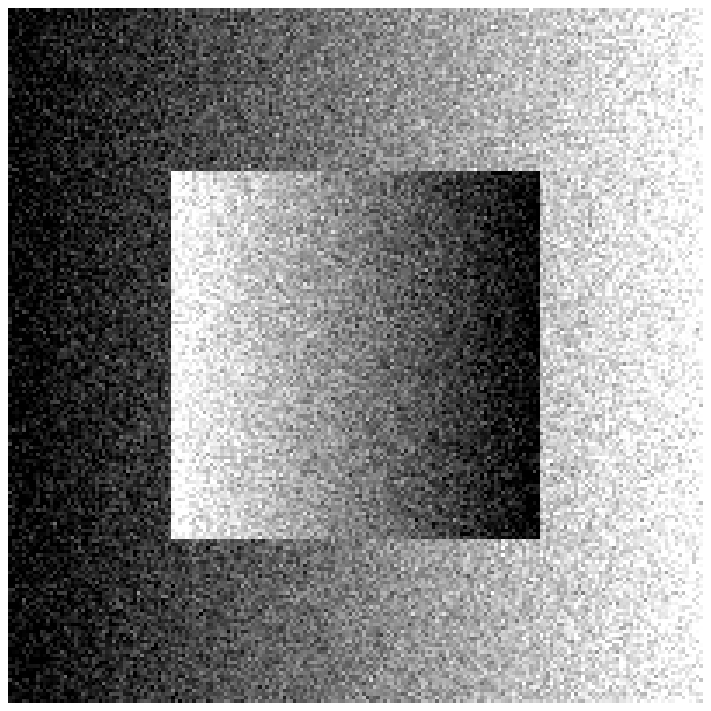}
                \caption{\centering Noisy, Gaussian noise, $\sigma=0.01$, $\mathrm{SSIM}=0.1791$}
                \label{square_infty:a}
\end{subfigure}
\begin{subfigure}[t]{0.193\textwidth}
                \centering                                                  
                \includegraphics[width=0.9\textwidth]{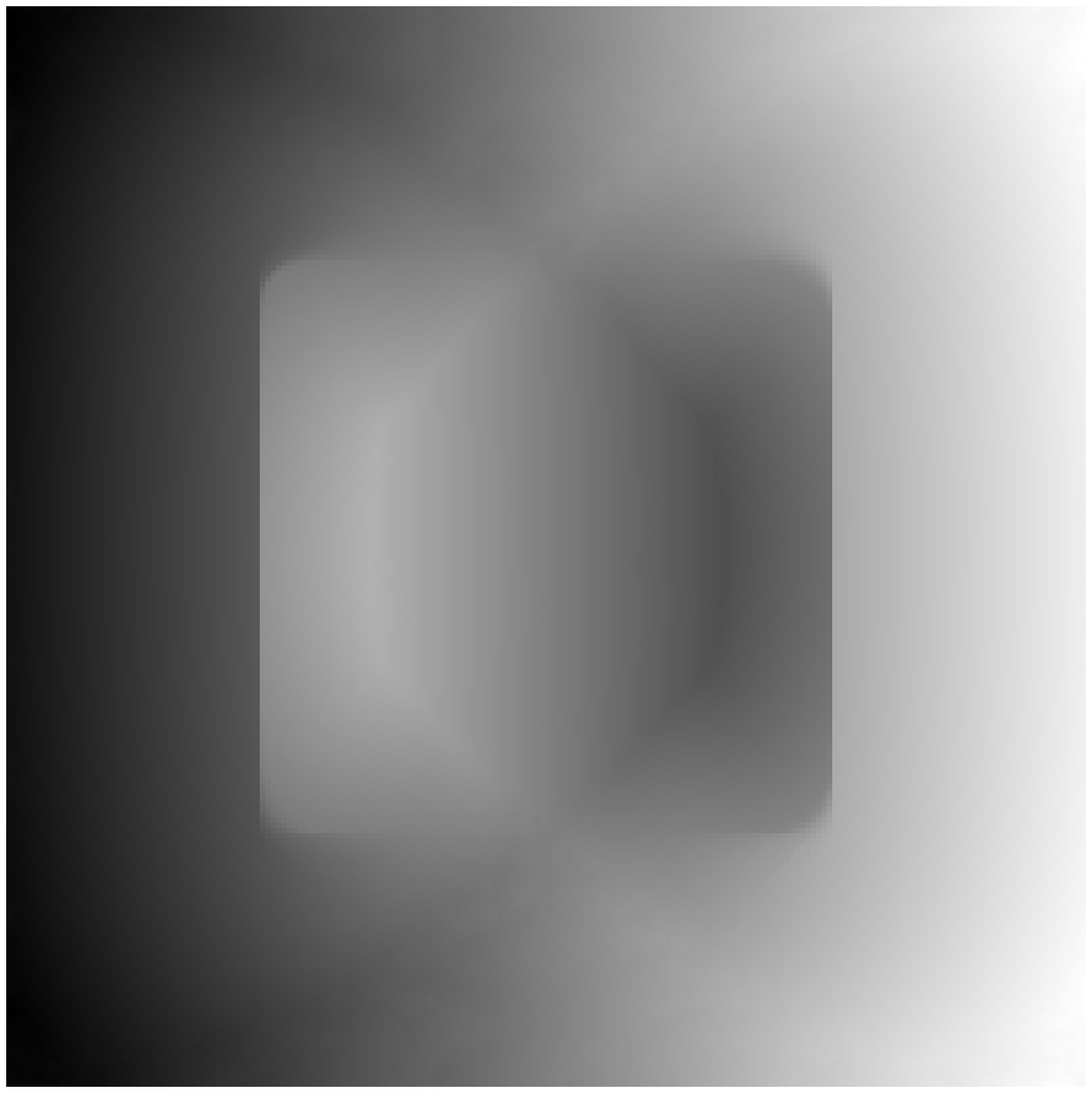}
                \caption{\centering $\mathrm{TVL}^{\infty}$: $\alpha=5$, $\beta=60000$, $\mathrm{SSIM}				=0.8197$}
                \label{square_infty:a}
\end{subfigure}
\begin{subfigure}[t]{0.193\textwidth}
                \centering                                                  
                \includegraphics[height=0.9\textwidth]{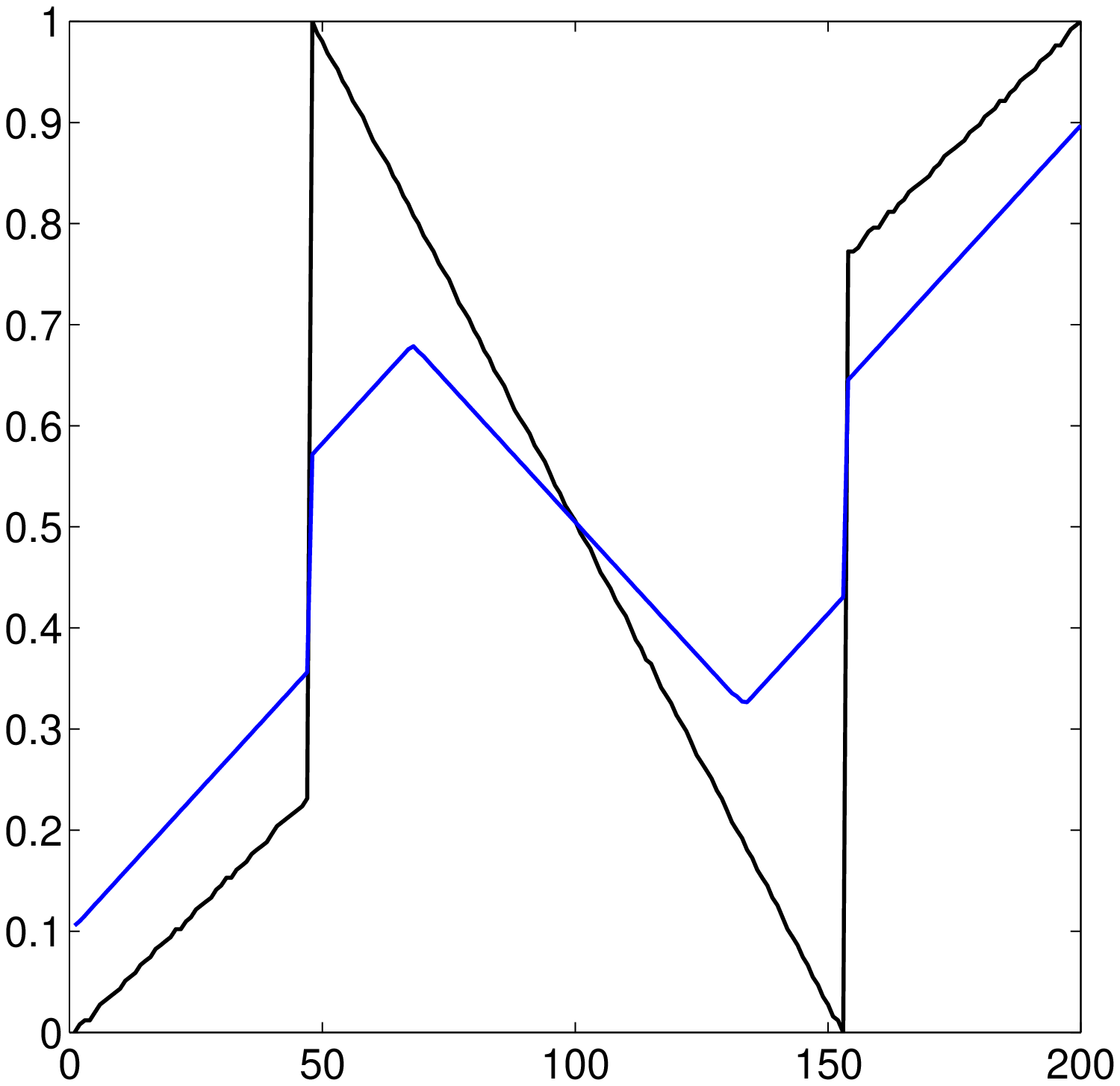}
                \caption{\centering Middle row profiles of Figure \ref{square_infty:a} (blue) and the ground truth (black)}
                \label{square_infty:b}
\end{subfigure}
\begin{subfigure}[t]{0.193\textwidth}
                \centering                                                  
                \includegraphics[width=0.9\textwidth]{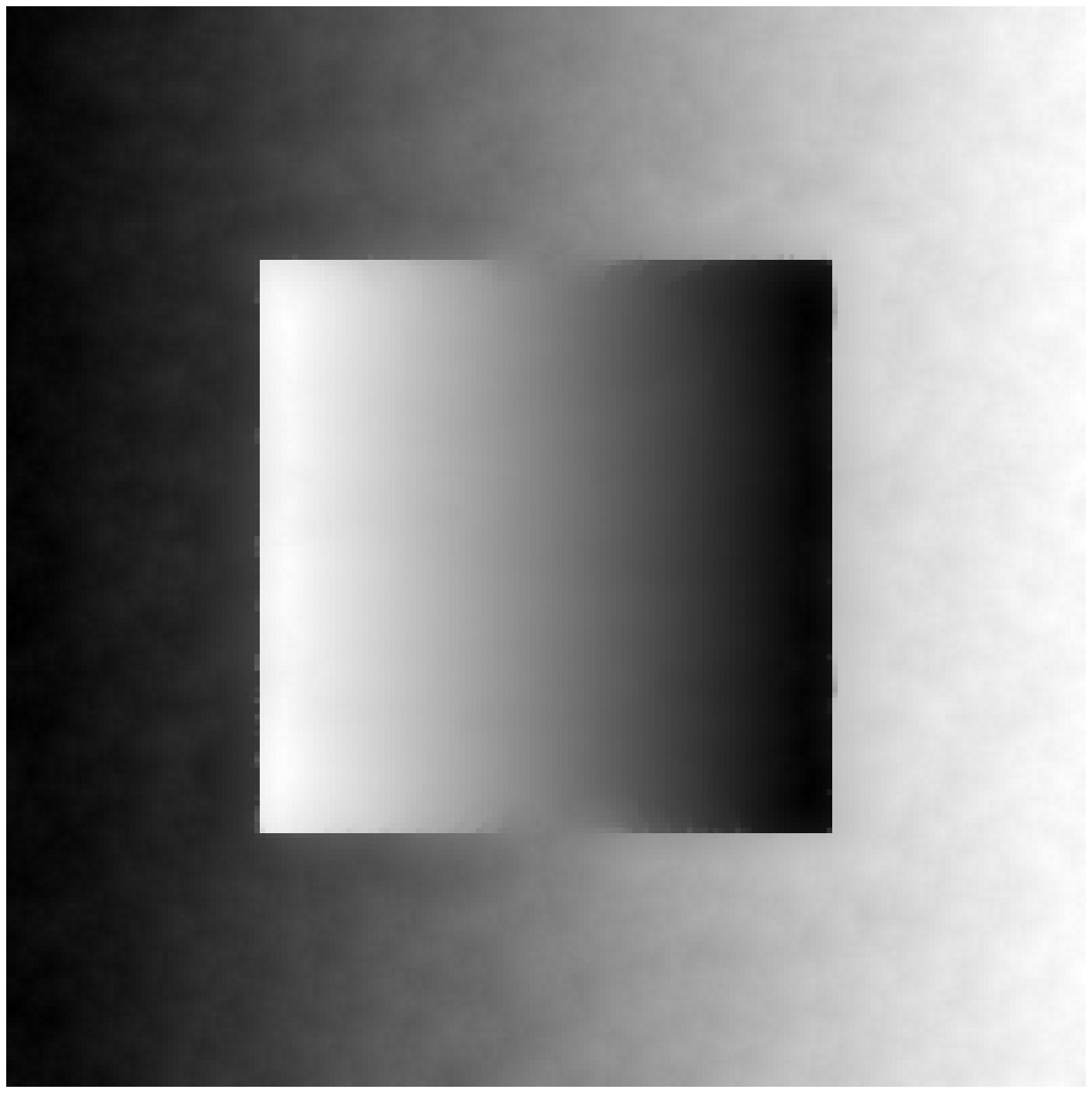}
               \caption{\centering Bregman iteration $\mathrm{TVL}^{\infty}$: $\alpha=5$, $\beta=60000$, $				\mathrm{SSIM}=0.9601$, }
                 \label{square_infty:c}
\end{subfigure}
\begin{subfigure}[t]{0.193\textwidth}
                \centering                                                  
                \includegraphics[height=0.9\textwidth]{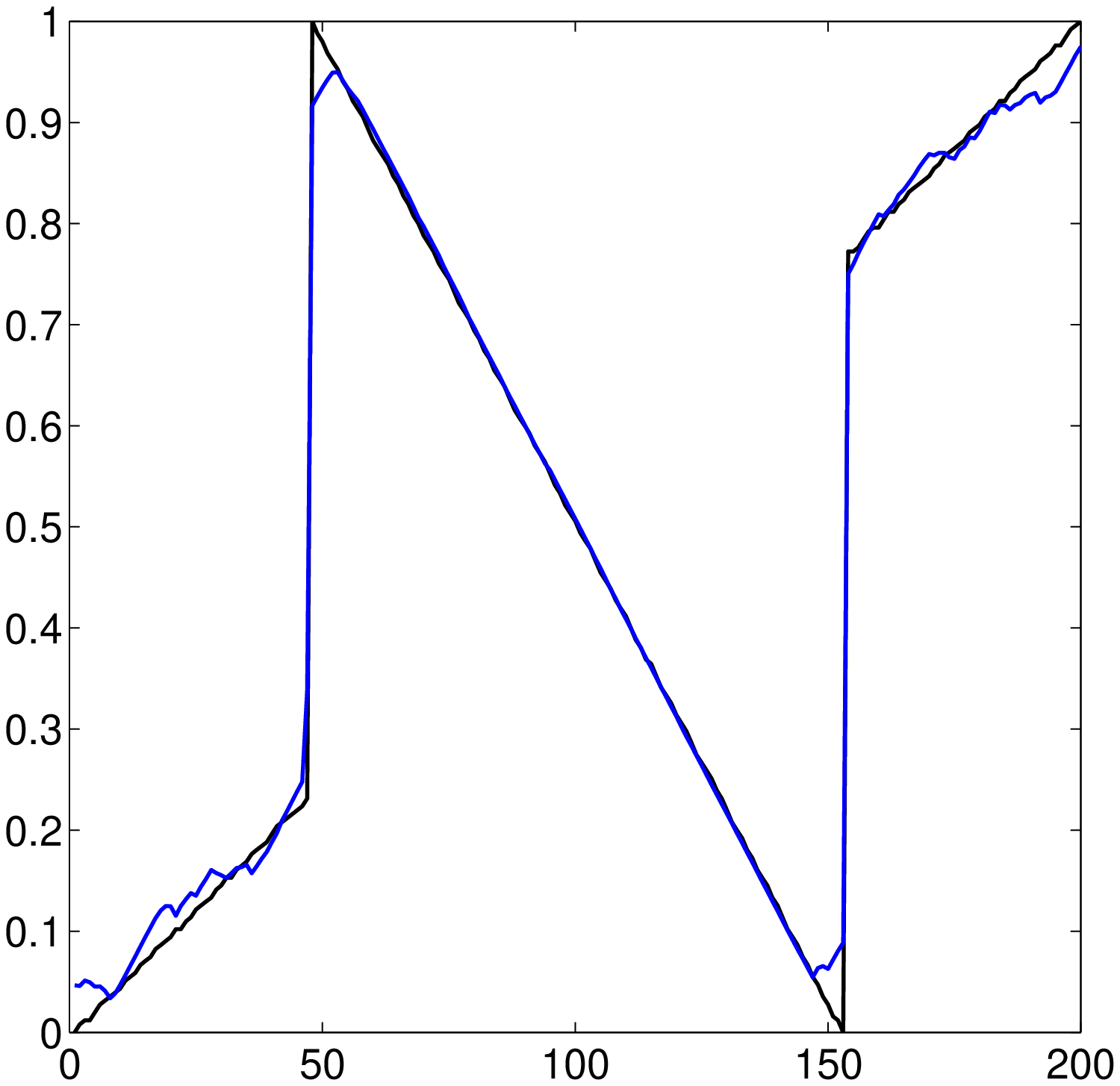}
               \caption{\centering Middle row profiles of Figure \ref{square_infty:c} (blue) and the ground truth (black) }
                 \label{square_infty:d}
\end{subfigure}\\[0.8cm]
\caption{Illustration of the fact that $\mathrm{TVL^{\infty}}$ regularisation favours gradients of fixed modulus }
\label{fig:square}
\end{center}\vspace{0.1cm}
\end{figure}
There we see that in order to get a staircasing--free result with $\mathrm{TVL^{\infty}}$ we also get a loss of geometric information, Figure \ref{square_infty:a}, as the model tries to fit an image with constant gradient, see  the middle row profiles in Figure \ref{square_infty:b}. While improved results can be achieved with the Bregman iteration version, Figure \ref{square_infty:c}, the result is not yet fully satisfactory as an \emph{affine staircasing} is now present in the image, Figure \ref{square_infty:d}.\\

\noindent
\textbf{Spatially adapted $\boldsymbol{\mathrm{TVL}^{\infty}}$}: One way to allow for different gradient values in the reconstruction, or in other words allow the variable $w$ to take different values, is to treat $\beta$ as a spatially varying parameter, i.e., $\beta=\beta(x)$. This leads to the spatially adapted version of $\mathrm{TVL}^{\infty}$:
\begin{equation}\label{tvlinf_sa}
\mathrm{TVL}_{\mathrm{s.a.}}^{\infty}(u)=\min_{w\in\mathrm{L}^{\infty}(\om)} \alpha \|Du-w\|_{\mathcal{M}}+\|\beta w\|_{\mathrm{L}^{\infty}(\om)}.
\end{equation}
The idea is to choose $\beta$ large in  areas where the gradient is expected to be small and vice versa, see Figure \ref{fig:square_sa} for a simple illustration.
\begin{figure}[t!]
\begin{center}
\begin{subfigure}[t]{0.193\textwidth}
                \centering                                                  
                \includegraphics[width=0.9\textwidth]{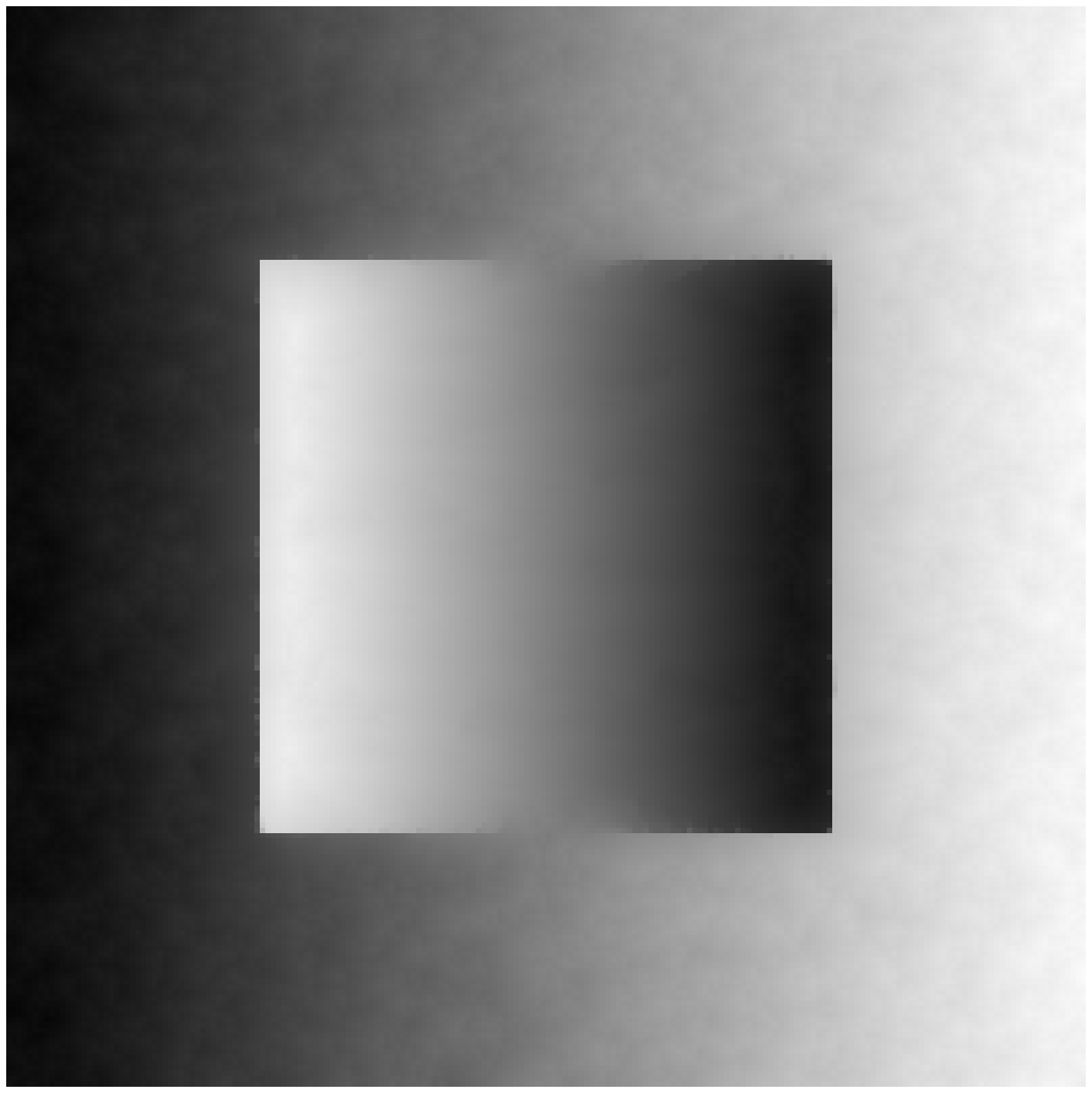}
                \caption{\centering $\mathrm{TVL}^{\infty}$: $\alpha=0.3$, $\beta=3500$, $\mathrm{SSIM}				=0.9547$}
                \label{weighted_tvl_inf:a}
\end{subfigure}
\begin{subfigure}[t]{0.193\textwidth}
                \centering                                                  
                \includegraphics[width=0.9\textwidth]{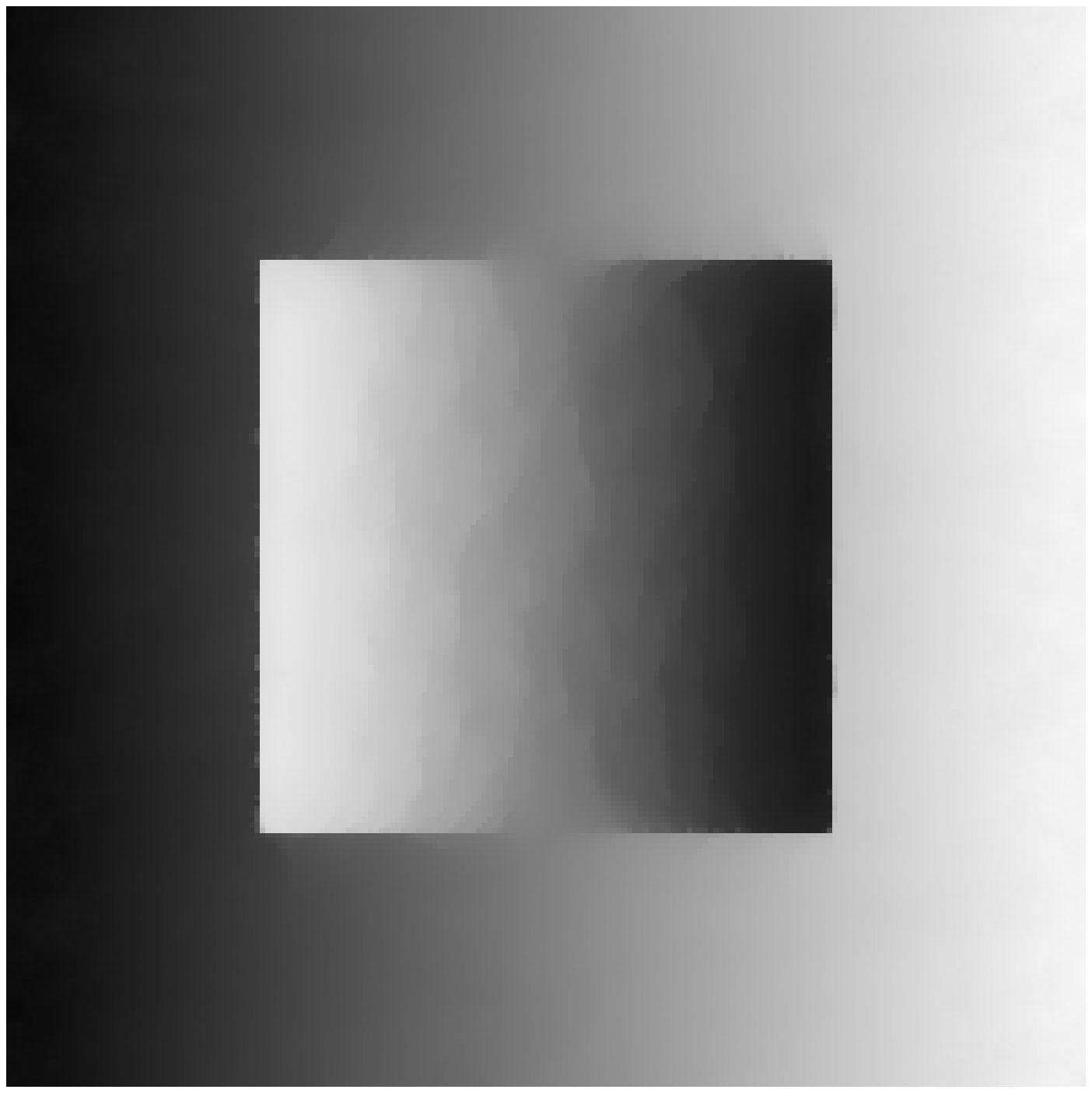}
                \caption{\centering $\mathrm{TVL}^{\infty}$: $\alpha=0.3$, $\beta=7000$, $\mathrm{SSIM}				=0.9672$}
                \label{weighted_tvl_inf:b}
\end{subfigure}
\begin{subfigure}[t]{0.193\textwidth}
                \centering                                                  
                \includegraphics[width=0.9\textwidth]{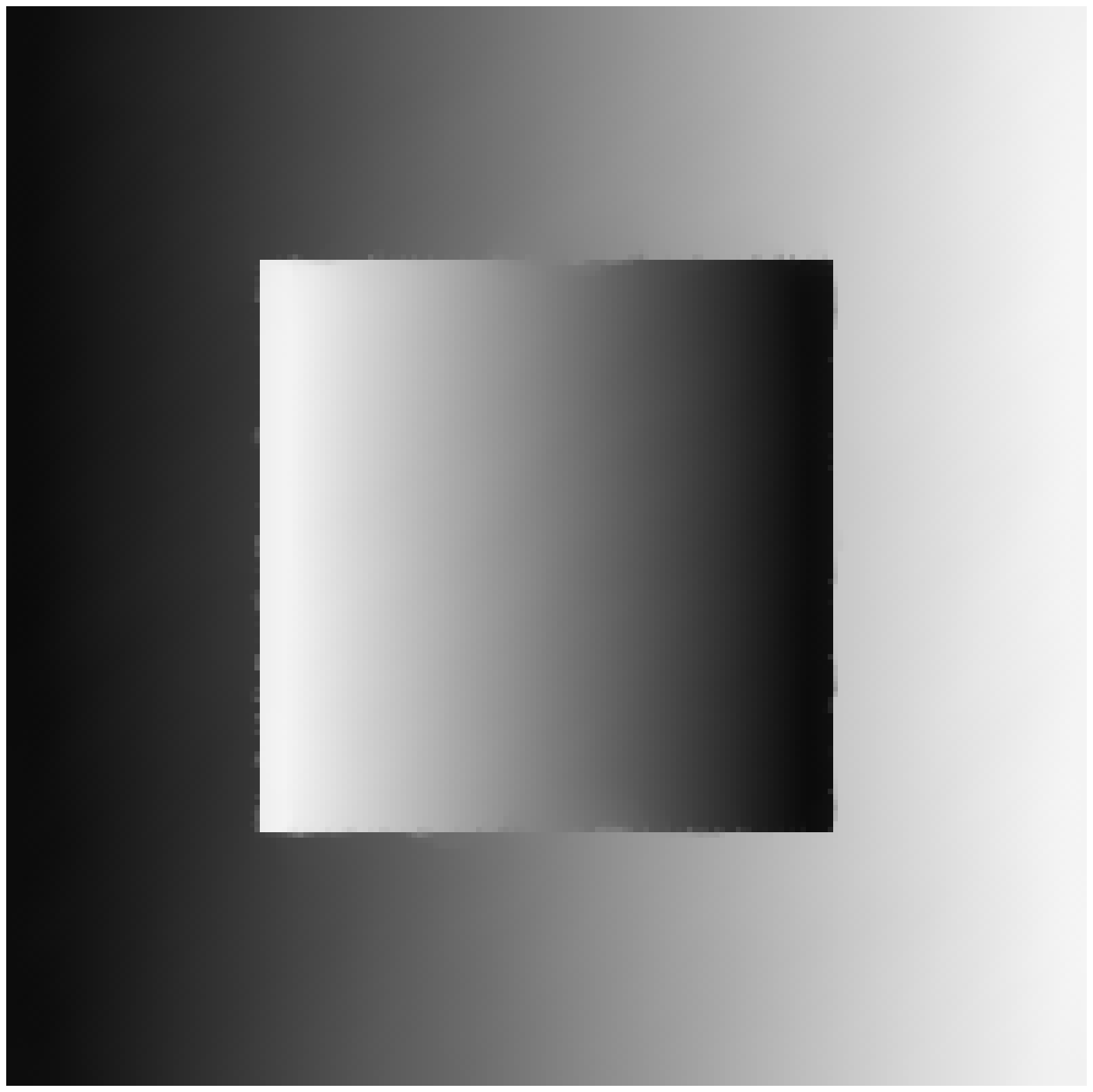}
                 \caption{\centering Bregman iteration $\mathrm{TGV}$: $\alpha=2$, $\beta=10$, $							\mathrm{SSIM}=0.9889$} 
                \label{weighted_tvl_inf:d}
\end{subfigure}
\begin{subfigure}[t]{0.193\textwidth}
                \centering                                                  
                \includegraphics[width=0.9\textwidth]{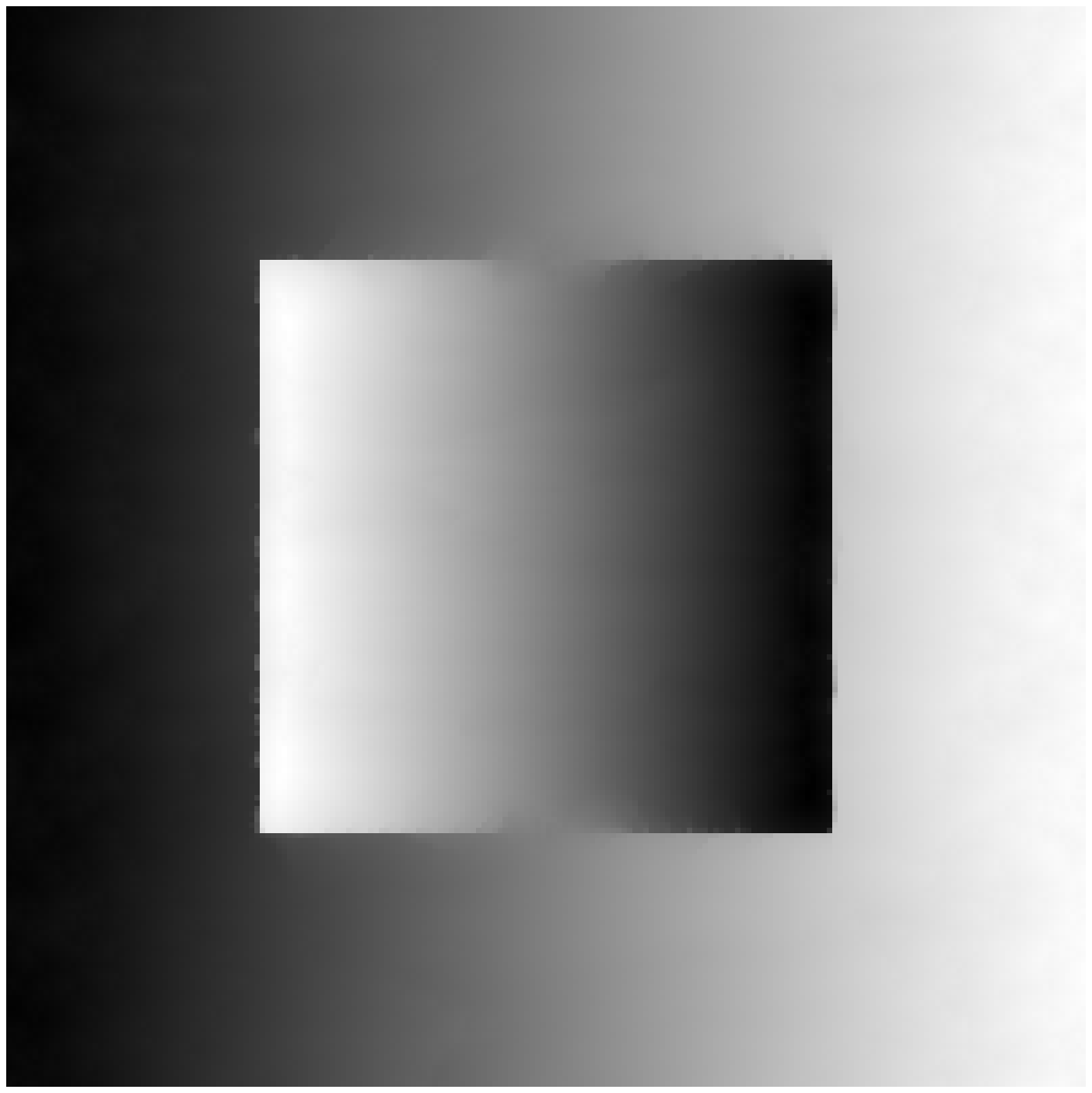}
                \caption{\centering Bregman iteration $\mathrm{TVL}_{\mathrm{s.a.}}^{\infty}$: $\alpha=5$, $\beta_{in}	=6\cdot10^{4}$, $\beta_{out}=11\cdot10^{4}$, $\mathrm{SSIM}=0.9837$}
                \label{weighted_tvl_inf:c}
\end{subfigure}
\begin{subfigure}[t]{0.193\textwidth}
                \centering                                                  
                \includegraphics[width=0.9\textwidth]{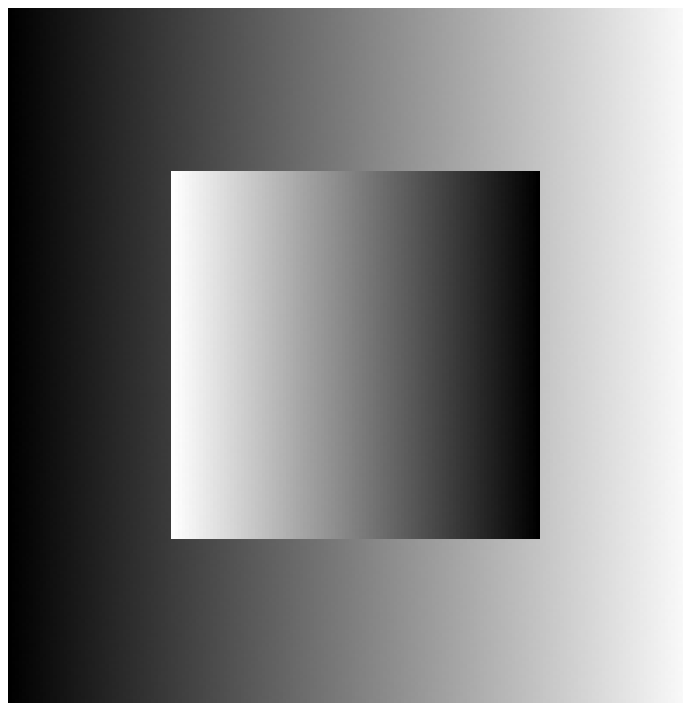}
                 \caption{\centering Ground truth} 
                \label{weighted_tvl_inf:clean}
\end{subfigure}
\\[0.8cm]
\caption{$\mathrm{TVL}^{\infty}$ reconstructions for different values of $\beta$. The best result is obtained by  spatially varying $\beta$, setting it inversely proportional to the gradient, where we obtain a similar result to the $\mathrm{TGV}$ one}
\label{fig:square_sa}
\end{center}
\end{figure}
In this example the slope inside the inner square is roughly twice the slope outside.
We can achieve a perfect reconstruction inside the square by setting $\beta=3500$, with artefacts outside, see Figure \ref{weighted_tvl_inf:a} and a perfect reconstruction outside by setting the value of $\beta$ twice as large, i.e., $\beta=7000$, Figure \ref{weighted_tvl_inf:b}. In that case, artefacts appear inside the square. By setting a spatially varying $\beta$ with a ratio $\beta_{out}/\beta_{in}\simeq 2$ and using the Bregman iteration version, we  achieve an almost perfect result, visually very similar to the $\mathrm{TGV}$ one, Figures \ref{weighted_tvl_inf:d}--\ref{weighted_tvl_inf:c}. This example suggests that ideally $\beta$ should be inversely proportional to the gradient of the ground truth. Since this information is not available in practice we use a pre-filtered version of the noisy image and we set
\begin{equation}\label{beta_rule}
\beta(x)=\frac{c}{|\nabla f_{\sigma}(x)|+\epsilon}.
\end{equation} 
Here $c$ is a positive constant to be tuned, $\epsilon>0$ is a small parameter and $f_{\sigma}$ denotes a smoothing of the data $f$ with a Gaussian kernel.
\begin{figure}[t!]
\begin{center}
\begin{subfigure}[t]{0.24\textwidth}
                \centering                                                  
                \includegraphics[width=0.9\textwidth]{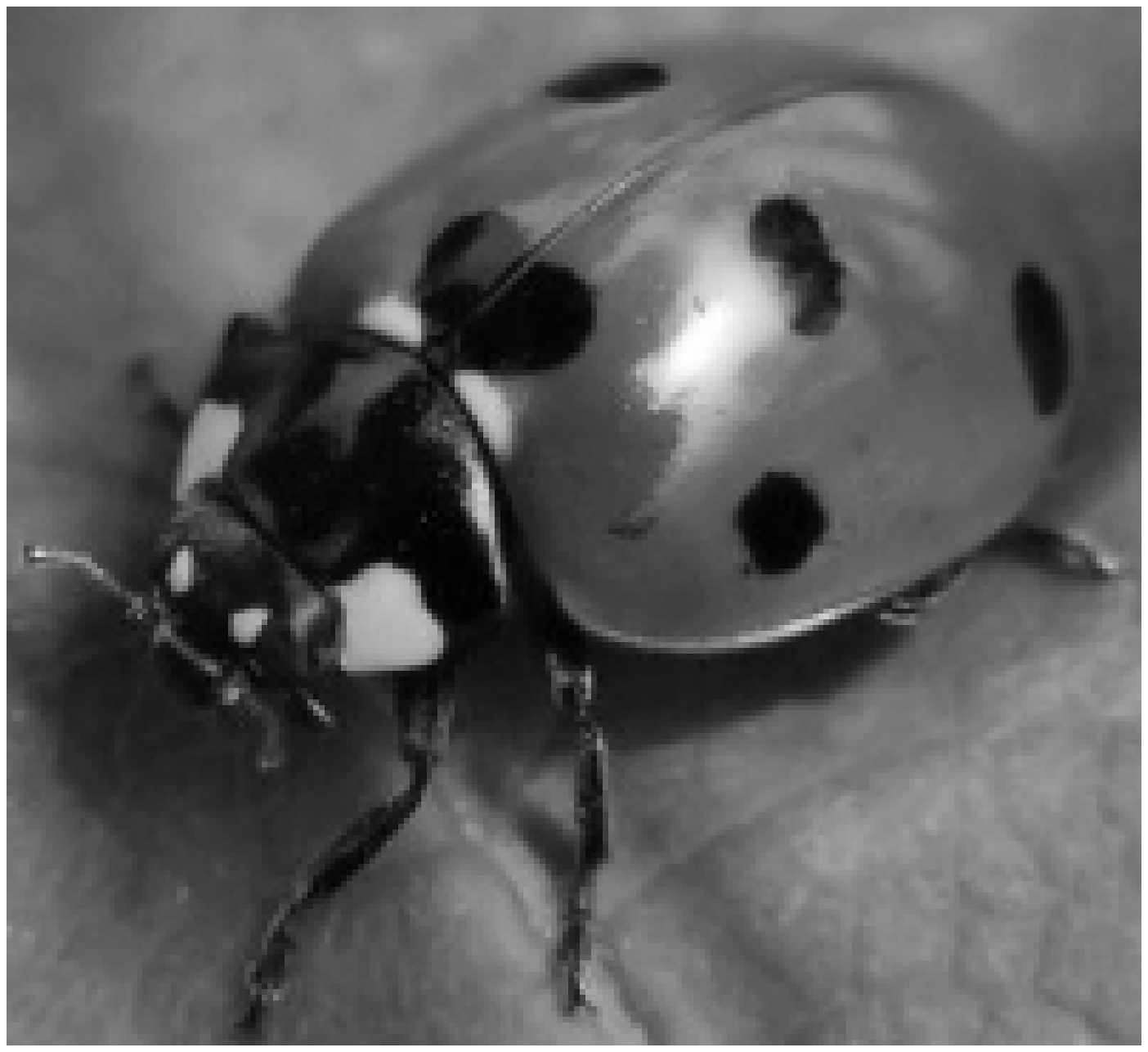}
                \caption{\centering Ladybug}
                \label{ladybug_1:a}
\end{subfigure}
\begin{subfigure}[t]{0.24\textwidth}
                \centering                                                  
                \includegraphics[width=0.9\textwidth]{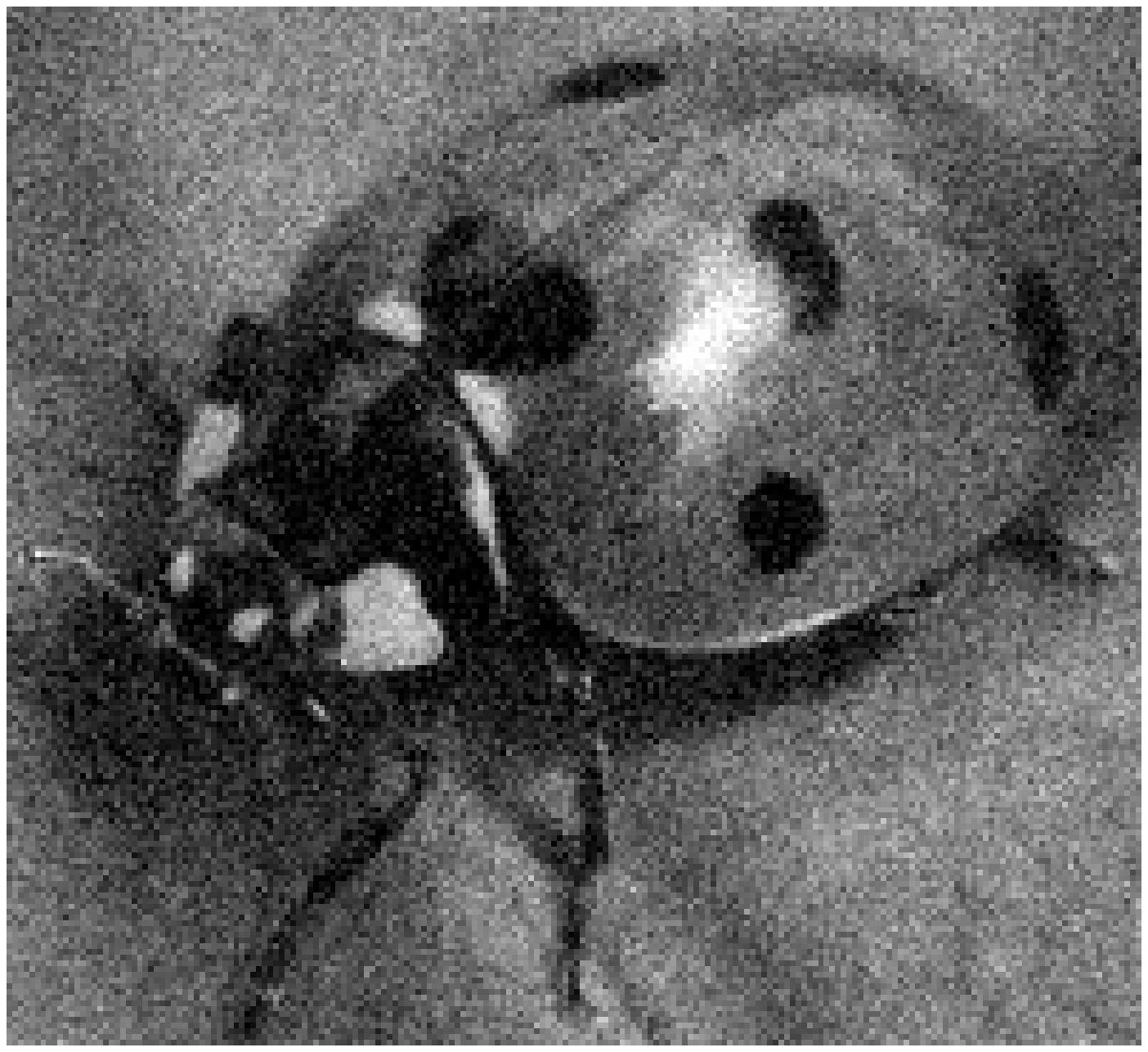}
                \caption{\centering Noisy data, Gaussian noise of $\sigma=0.005$, $\mathrm{SSIM}					=0.4076$}
                \label{ladybug_1:b}
\end{subfigure}
\begin{subfigure}[t]{0.24\textwidth}
                \centering                                                  
                \includegraphics[width=0.9\textwidth]{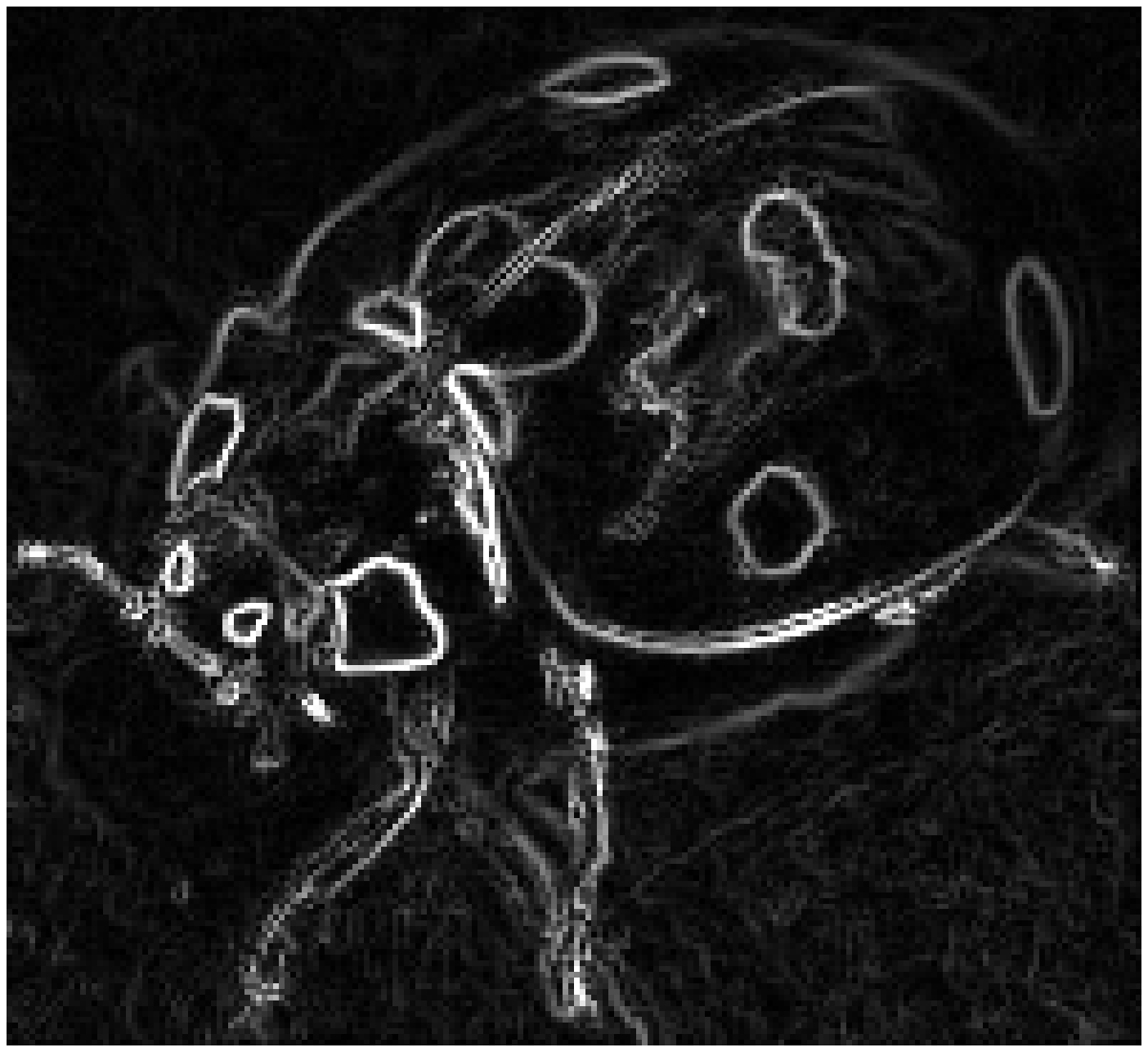}
                \caption{\centering Gradient of the ground truth}
                 \label{ladybug_1:d}
\end{subfigure}
\begin{subfigure}[t]{0.24\textwidth}
                \centering 
                 \includegraphics[width=0.9\textwidth]{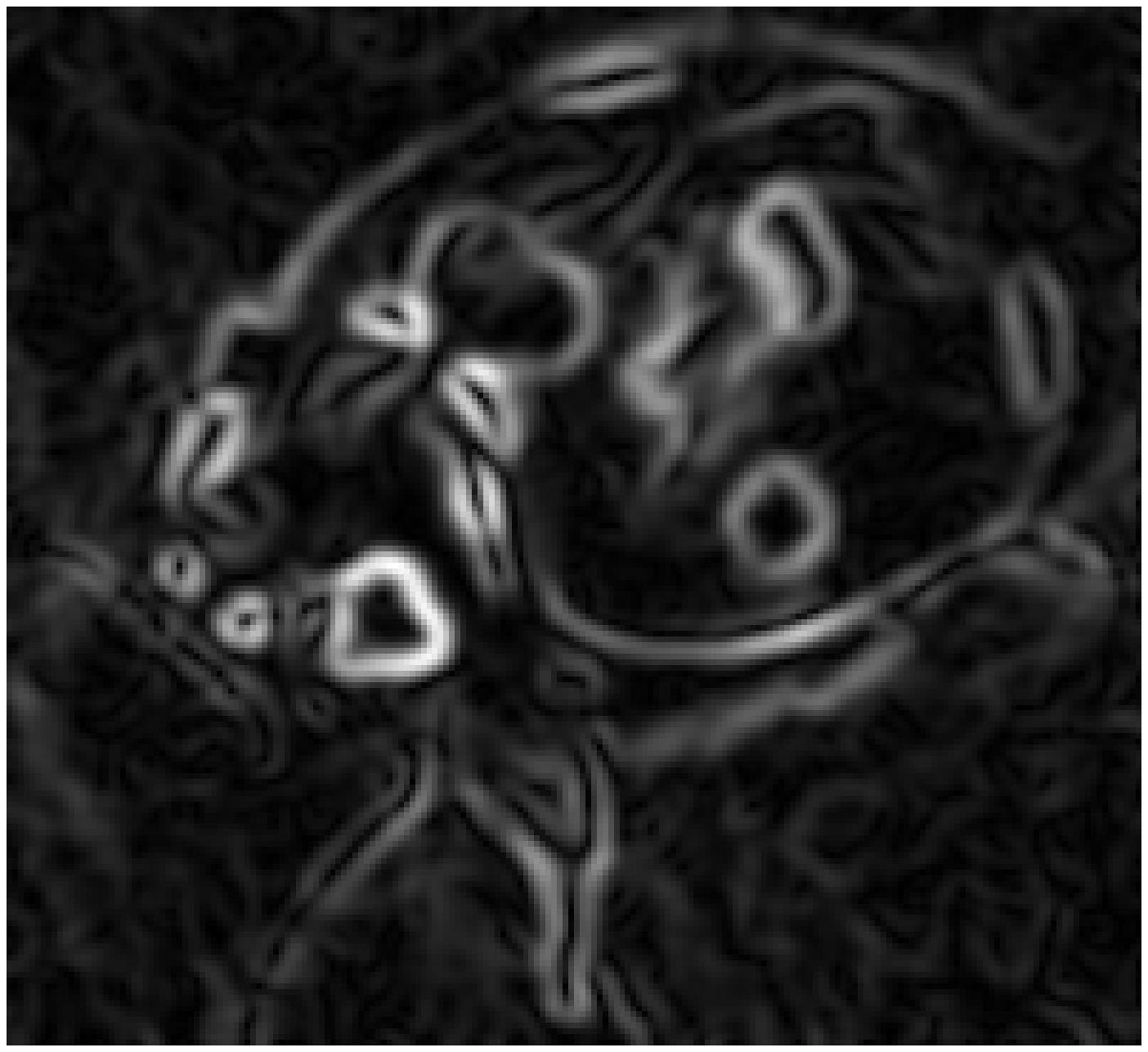}                                               		       \caption{\centering Gradient of the smoothed data,  $\sigma=2$, 13x13 pixels window}
                \label{ladybug_1:e}
\end{subfigure}\\[1cm]
\begin{subfigure}[t]{0.24\textwidth}
                \centering                                                  
                \includegraphics[width=0.9\textwidth]{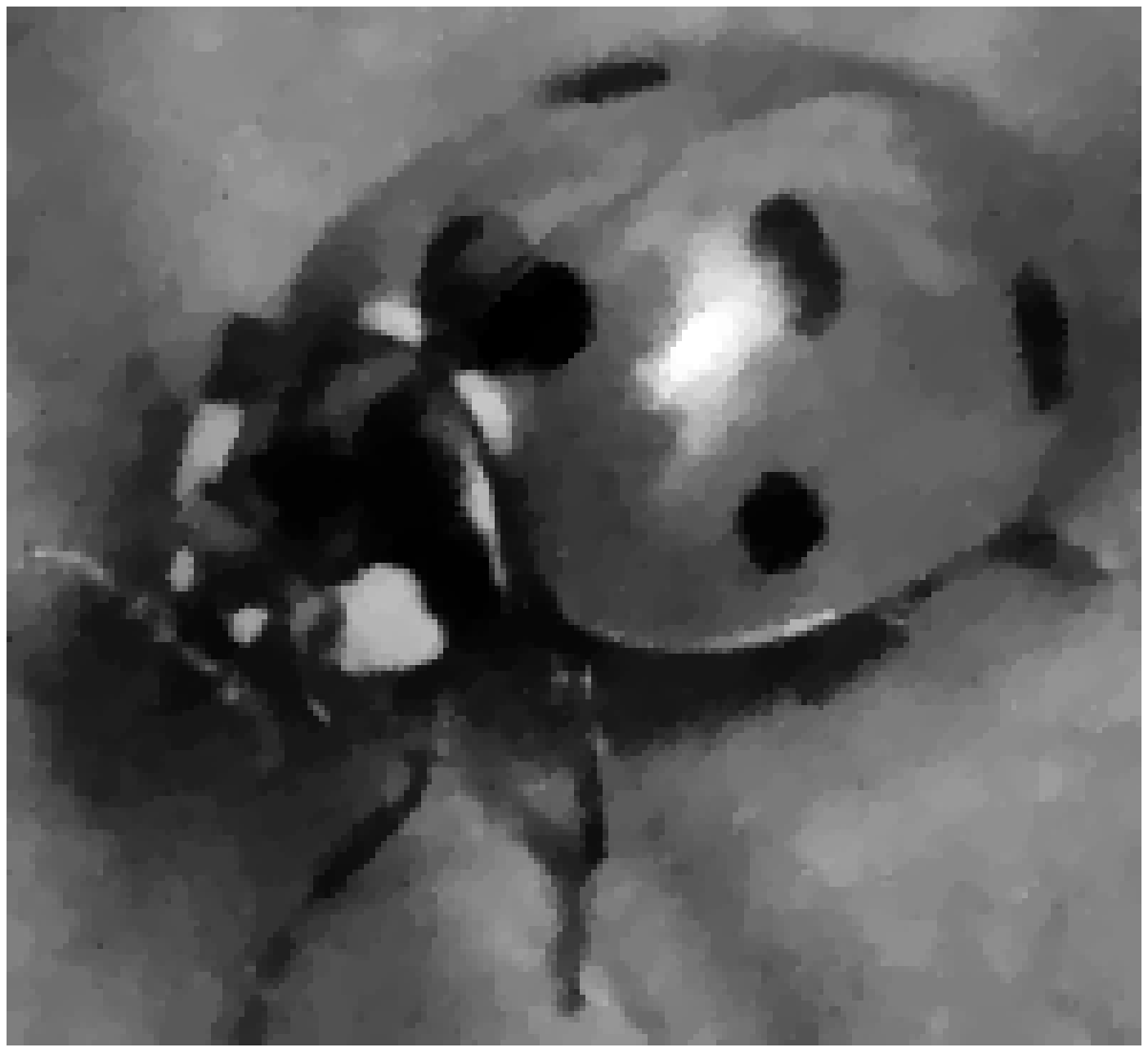}
                \caption{\centering $\mathrm{TV}$: $\alpha=0.06$, $\mathrm{SSIM}=0.8608$}
                \label{ladybug_2:f}
\end{subfigure}
\begin{subfigure}[t]{0.24\textwidth}
                \centering                                                  
                \includegraphics[width=0.9\textwidth]{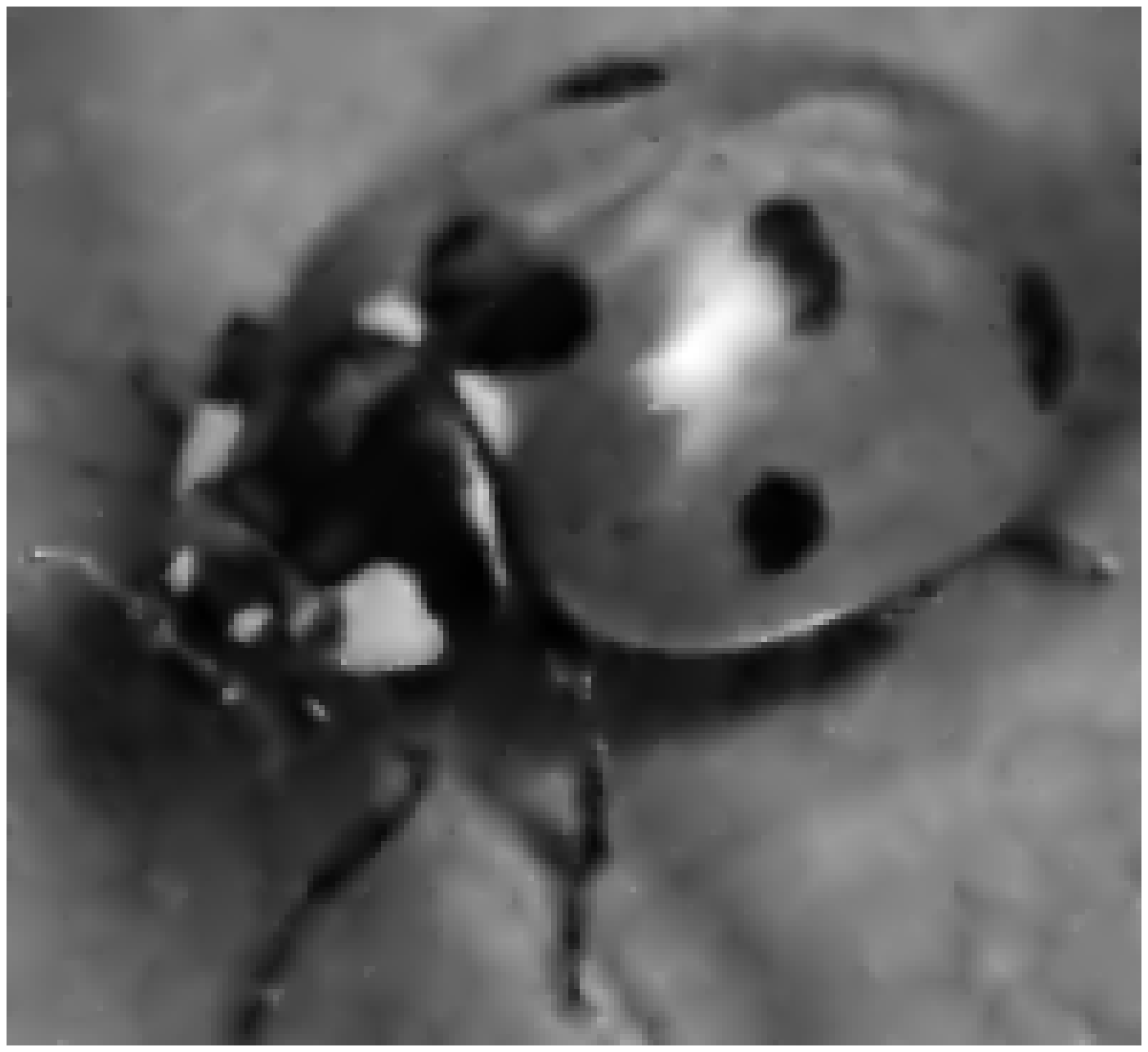}
                \caption{\centering $\mathrm{TGV}$: $\alpha=0.068$, $\beta=0.046$, $\mathrm{SSIM}				=0.8874$}
                \label{ladybug_2:g}
\end{subfigure}
\begin{subfigure}[t]{0.24\textwidth}
                \centering                                                  
                \includegraphics[width=0.9\textwidth]{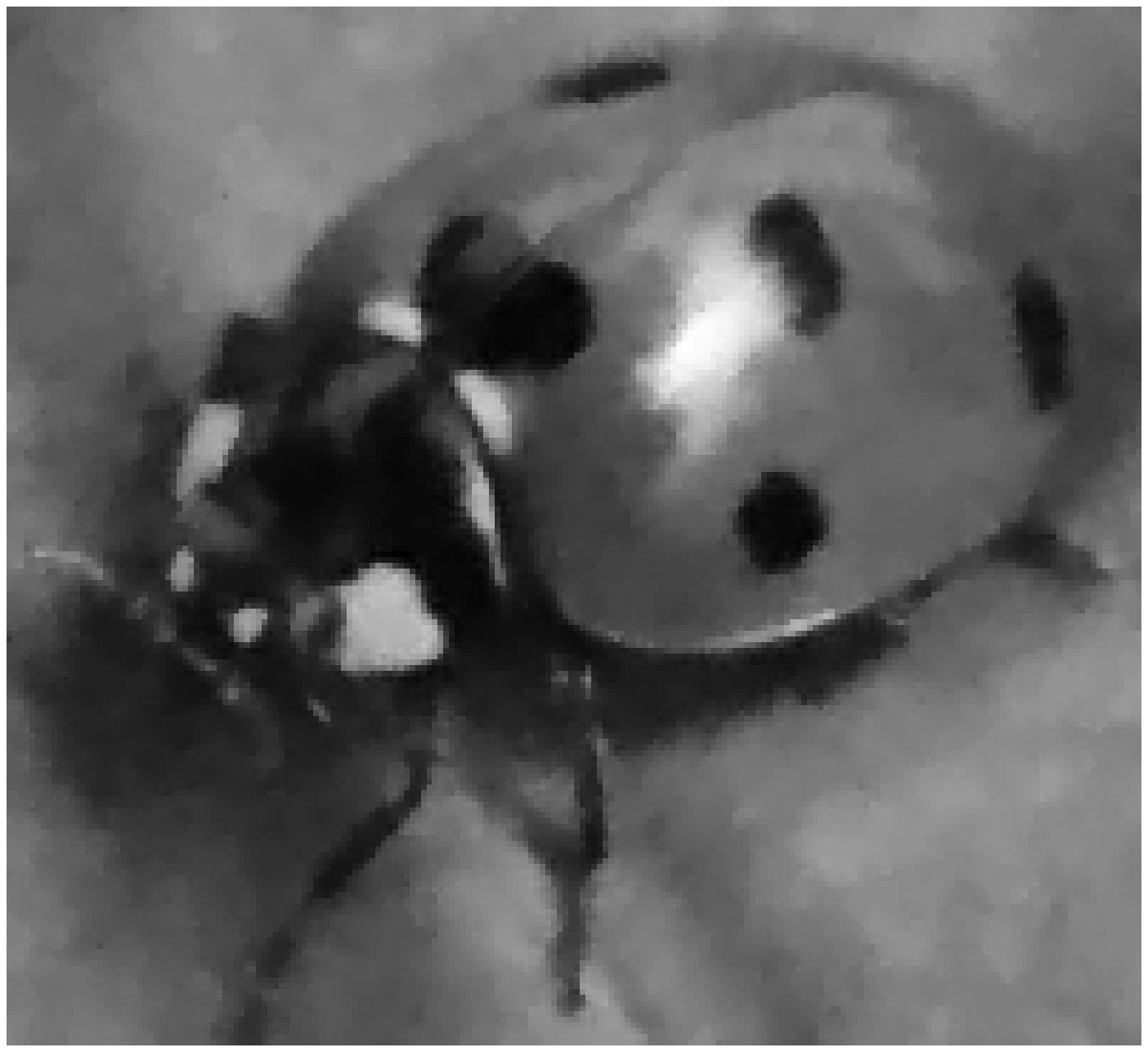}
                \caption{\centering $\mathrm{TVL}_{\mathrm{s.a.}}^{\infty}$: $\alpha=0.07$ and $\beta$ computed from filtered version with $c=30$, $\varepsilon=10^{-4}$, $\mathrm{SSIM}=0.8729$}
                \label{ladybug_2:h}
\end{subfigure}
\begin{subfigure}[t]{0.24\textwidth}
                \centering                                                  
                \includegraphics[width=0.9\textwidth]{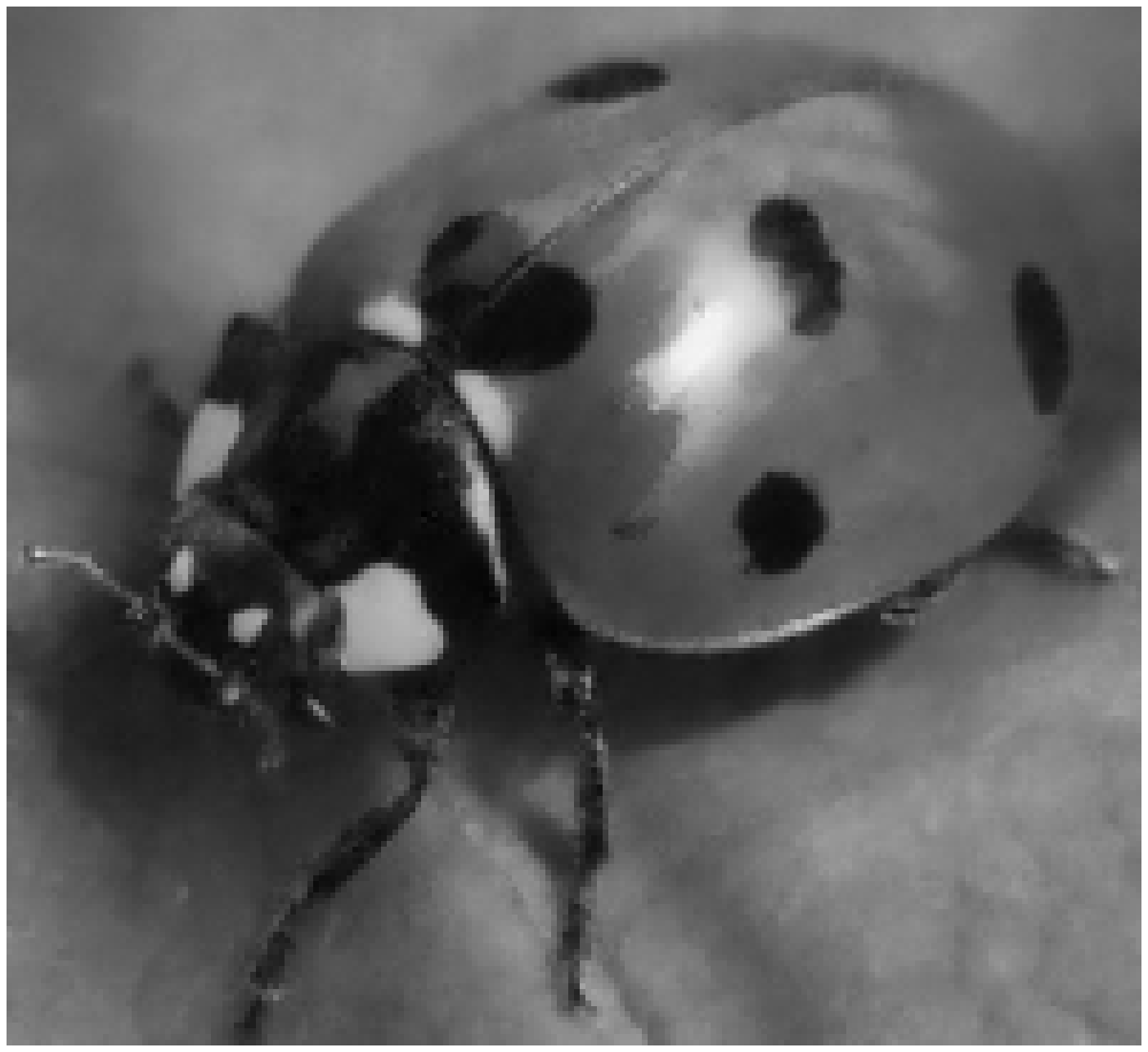}
                \caption{\centering $\mathrm{TVL}_{\mathrm{s.a.}}^{\infty}$: $\alpha=0.5$ and $\beta$ computed from ground truth with $c=50$, $\varepsilon=10^{-4}$, $\mathrm{SSIM}=0.9300$}
                \label{ch3_ladybug_2:d}
\end{subfigure}\\[0.8cm]
\end{center}
\caption{Best reconstruction of the ``Ladybug'' in terms of SSIM using $\mathrm{TV}$, $\mathrm{TVG}$ and spatially adapted $\mathrm{TVL^{\infty}}$ regularisation. The $\beta$ for the latter is computed both from the filtered (Figure \ref{ladybug_2:h}) and the ground truth image (Figure \ref{ch3_ladybug_2:d})}% \\[0.2cm]
\label{ladybug}
\end{figure}
We have applied the spatially adapted $\mathrm{TVL}^{\infty}$ (non-Bregman) with the rule \eqref{beta_rule} in the natural image ``Ladybug'' in Figure \ref{ladybug}. There we pre-smooth the noisy data with a discrete Gaussian kernel of $\sigma=2$, Figure \ref{ladybug_1:e}, and then apply  $\mathrm{TVL}_{\mathrm{s.a.}}^{\infty}$ with the rule \eqref{beta_rule}, Figure \ref{ladybug_2:h}. Comparing to the best $\mathrm{TGV}$ result in Figure \ref{ladybug_2:g}, the SSIM value is slightly smaller but there is a better recovery of the image details (objective). Let us note here that we do not claim that our rule for choosing $\beta$ is the optimal one. For demonstration purposes, we  show a reconstruction where we have computed $\beta$ using the gradient of ground truth $u_{\mathrm{g.t.}}$, Figure \ref{ladybug_1:d}, as $\beta(x)=c/(|\nabla u_{\mathrm{g.t.}}(x)|+\epsilon)$, with excellent results, Figure \ref{ch3_ladybug_2:d}. This is of course impractical, since the gradient of the ground truth is typically not available but it shows that there is plenty of room for improvement regarding the choice of $\beta$. One could also think of reconstruction tasks where a good quality version of the gradient of the image is available, along with a noisy version of the image itself. Since the purpose of the present paper is to demonstrate the capabilities of the $\mathrm{TVL}^{\infty}$ regulariser, we leave that for future work. 

\subsubsection*{Acknowledgments.} The authors acknowledge support of the Royal Society International Exchange Award Nr. IE110314. This work is further supported by the King Abdullah University for Science and Technology (KAUST) Award No. KUK-I1-007-43, the EPSRC first
grant Nr. EP/J009539/1 and the EPSRC grant Nr. EP/M00483X/1.
 MB acknowledges further support by ERC via Grant EU FP 7-ERC Consolidator Grant 615216 LifeInverse. KP acknowledges the financial support of EPSRC and the Alexander von Humboldt Foundation while in UK and Germany respectively. EP acknowledges support by Jesus College, Cambridge and Embiricos Trust.

\bibliographystyle{abbrv}
\bibliography{kostasbib}

\end{document}